\renewcommand{\@biblabel}[1]{#1.} 
\newtheorem{Theorem}{Theorem}[section]
\newtheorem{Proposition}[Theorem]{Proposition}
\newtheorem{Lemma}[Theorem]{Lemma}
\theoremstyle{definition}
\theoremstyle{remark}
\newtheorem{remark}[Theorem]{Remark}
\newtheorem*{Remarks}{Remarks}
\newtheorem*{Remark}{Remark}
\newcommand{\suml}{\sum\limits}
\newcommand{\yvec}{{y_1,\dots,y_n}}
\newcommand{\sumN}{{\left| \boldsymbol{N} \right|}}
\newcommand{\sumvec}[2]{#1_1+\cdots + #1_#2} 
\newcommand\sumK{{\left| \boldsymbol {K} \right|}}
\newcommand{\summ}{{\left| \boldsymbol {m} \right|}}
\newcommand\sumj{{\left| \boldsymbol {j} \right|}}
\newcommand\sumk{{\left| \boldsymbol {k} \right|}}
\newcommand{\N}{{\boldsymbol N}}
\newcommand{\y}{{\boldsymbol y}}
\newcommand{\F}{{\boldsymbol F}}
\newcommand{\G}{{\boldsymbol G}}
\newcommand{\M}{{\boldsymbol M}}
\newcommand{\D}{{\boldsymbol D}}
\renewcommand{\k}{{\boldsymbol k}}
\renewcommand{\j}{{\boldsymbol j}}
\newcommand{\m}{{\boldsymbol m}}
\newcommand{\kvec}{{k_1,\dots,k_n}}
\newcommand{\multsum}[3]{{\sum\limits_{\substack{{0\le #1_#3 \le #2_#3} \\  
{#3 =1,2,\dots, n}}}}}
\newcommand{\qrfac}[2]{{\left({#1}\right)_{#2}}} 
\newcommand{\pqrfac}[3]{{\left({#1};#3\right)_{#2}}}
\newcommand{\triprod}[1]{\prod\limits_{1\le r < s \le #1}}
\newcommand{\sqprod}[1]{\prod\limits_{r, s =1}^{#1}} 
\newcommand{\smallprod}[1]{\prod\limits_{r =1}^{#1}} 
\newcommand{\xover}[1]{{#1_{r}}/{#1_{s}}}
\newcommand{\powerq}[2]{q^{\suml_{r=1}^{#2} (r-1)#1_r }} 
\newcommand{\vandermonde}[3]{\triprod{#3} \! 
\frac{1-q^{#2_r-#2_s} \xover {#1} }{1-\xover{#1}}
}
\newcommand{\multisum}[2]{\underset{r=1, \dots, #2}{\sum\limits_{#1_r\geq 0}}} 
\mathchardef\pFcomma=\mathcode`, 
\newcommand*\rPhis[6]{%
  \begingroup
  \begingroup\lccode`~=`,
    \lowercase{\endgroup\def~}{\pFcomma\mkern\pFqskip}%
  \mathcode`,=\string"8000
  {}_{#1}\phi_{#2}\Biggl[\genfrac..{0pt}{}{#3}{#4};#5, #6\Biggr]%
  \endgroup
}
\numberwithin{equation}{section}
\begin{document} 
\title[Expansion formulas for multiple series]{Expansion formulas for multiple basic hypergeometric series over root systems}
\author[G.~Bhatnagar]{Gaurav Bhatnagar
}

\address{Ashoka University, Sonipat, Haryana 131029, India}
\email{bhatnagarg@gmail.com}

\author[S.~Rai]{Surbhi Rai}
\address{Department of Mathematics, Indian Institute of Technology, Delhi 110067, India.}
\email{rai.surbhi2@gmail.com }

\date{\today}



\keywords{$U(n+1)$ Basic hypergeometric series, $A_n$ and $C_n$ Basic hypergeometric series, $A_n$ and $C_n$ Bailey transform, $q$-Lagrange inversion}
\subjclass[2010]{33D67, 33D15}

\begin{abstract}
We extend expansion formulas of Liu given in 2013 to the context of multiple series over root systems. Liu and others have shown the usefulness of these formulas in Special Functions and number-theoretic  contexts. We extend Wang and Ma's generalizations of Liu's work which they obtained using $q$-Lagrange inversion.  
We use the  $A_n$ and $C_n$ Bailey transformation and other summation theorems due to Gustafson, Milne, Milne and Lilly, and others, from the theory of $A_n$, $C_n$ and $D_n$ basic hypergeometric series. 
\end{abstract}

\maketitle 


\section{Introduction}\label{sec:intro}
In a series of papers, Zhi-Guo Liu 
 extended some of the central summation and transformation formulas of basic hypergeometric series. In particular, Liu extended Rogers' non-terminating very-well-poised $_6\phi_5$ summation formula, Watson's transformation formula, and gave an alternate approach to the orthogonality of the Askey-Wilson polynomials; see Liu~\cite{Liu2013a, Liu2013b, Liu2013c, Liu2014} and Liu and Zeng~\cite{LiuZeng2015}. These have been shown to be useful  in number-theoretic contexts too;  in particular, Chan and Liu~\cite{ChanLiu2018}, Wang and Yee~\cite{WangYee2019}, Wang~\cite{Wang2019} and Chen and Wang~\cite{ChenWang2020} have used them to prove Hecke-type identities, and identities for false theta  and mock theta functions.  All this work relies on three expansion formulas of Liu. It is our intent here to provide several infinite families of extensions of Liu's key formulas to multiple basic hypergeometric series over root systems.

But first we give an exposition of Liu's key formulas, and their generalization by Jin Wang and Xinrong Ma~\cite{WM2017}. (The references to Wang in the first paragraph are to Liquan Wang.) To state Liu's expansion formulas, we need the notation of $q$-rising factorials from Gasper and Rahman \cite{GR90}. 
The $q$-rising factorial is defined as $\pqrfac{A}{0}{q} :=1$, and when $k$ is a positive integer, 
\begin{equation*}
\pqrfac{A}{k}{q} := (1-A)(1-Aq)\cdots (1-Aq^{k-1}).
\end{equation*}
When $k=\infty$, we require $|q|<1$ for absolute convergence of the infinite product. 
Observe that for  $|q|<1$, 
\begin{equation*}
\pqrfac{A}{k}{q} = \frac{\pqrfac{A}{\infty}{q}}{\pqrfac{Aq^k}{\infty}{q}},
\end{equation*}
an identity that is used to define $q$-rising factorials when $k$ is a complex number. 
In what follows, we drop the `base' $q$ from our displays, and
use the compressed notation
$$\qrfac{a_1, a_2, \dots, a_n}{k}=\qrfac{a_1}{k}\qrfac{a_2}{k}\dots \qrfac{a_n}{k}.$$
We refer the reader to \cite{GR90} for the definition of $_r\phi_s$ series, and terms used to describe such series, such as balanced and very-well-poised, as well as standard summation and transformation formulas in this area.

Liu's main expansion formula~\cite[Theorem~1.1]{Liu2013b} is as follows. Let $f(x)$ be an analytic function near $x=0$; then, assuming suitable convergence conditions, 
\begin{multline}\label{liu-main1}
\frac{\qrfac{\alpha q, \alpha ab/q}{\infty}}{\qrfac{\alpha a, \alpha b}{\infty}}f(\alpha a) 
=\sum_{k=0}^\infty 
\frac{(1-\alpha q^{2k})\qrfac{\alpha, q/a}{k}}
{(1-\alpha)\qrfac{q, \alpha a}{k}}\Big(\frac{a}{q}\Big)^k
\sum_{j=0}^k
\frac{\qrfac{q^{-k}, \alpha q^k}{j}}{\qrfac{q, \alpha b}{j}} q^j f(\alpha q^{j+1}).
\end{multline}
From this, Liu~\cite[Theorem~4.1]{Liu2013a} obtains a general transformation formula: 
\begin{multline}\label{liu-gen1}
\frac{\qrfac{\alpha q, \alpha ab/q}{\infty}}{\qrfac{\alpha a, \alpha b}{\infty}}
\sum_{j=0}^\infty \qrfac{q/a}{j} (\alpha a)^j A_j  \\
=\sum_{k=0}^\infty 
\frac{(1-\alpha q^{2k})\qrfac{\alpha, q/a, q/b}{k}}
{(1-\alpha)\qrfac{q, \alpha a, \alpha b}{k}}\Big(-\frac{\alpha ab}{q}\Big)^k q^{\binom{k}2}
\sum_{j=0}^k
\frac{\qrfac{q^{-k}, \alpha q^k}{j}}{\qrfac{q/b}{j}} \Big(\frac{q^2}{b}\Big)^j A_j.
\end{multline}
In addition to these two, a third key formula of Liu is mentioned below.

Our work is in the context of multiple basic hypergeometric series over root systems. This type of series has been developed systematically (though sporadically) over the last many years; see Schlosser \cite{MS2020} for an encyclopedic survey and a comprehensive list of references. Previously, many of the central results of basic hypergeometric series have been extended.
The goal of this paper is to lift Liu's expansion formulas to basic hypergeometric series over root systems. 
 
In fact, we have exceeded our goal: we have extended Wang and Ma's~\cite{WM2017} more general expansion formulas.
While Liu ~ \cite{Liu2002, Liu2003} 
used techniques of $q$-calculus to derive \eqref{liu-main1}, 
Wang and Ma used $q$-Lagrange inversion, which---as Gessel and Stanton
\cite{GS1983} pointed out---is equivalent to matrix inversion; in this case, to the well-known Bailey transform. Wang and Ma's approach can be extended to multiple basic hypergeometric series over root systems, since the Bailey transform has been extended to this context.  Further, their results are generalizations of Liu's expansion formulas.

 Wang and Ma's 
 transformation formula~\cite[Cor.\ 2.15]{WM2017}  (slightly rewritten) is as follows.
\begin{multline}\label{wang-ma2}
\frac{\qrfac{A,aAy/b, bqy, aq}{\infty}}{\qrfac{Ay,aA/b, bq, aqy}{\infty}}
\sum_{j=0}^\infty \frac{(1-bq^{2j})\qrfac{1/y}{j}}{(1-b)\qrfac{bqy}{j}} y^j \beta_j  \\
=\sum_{k=0}^\infty 
\frac{(1-a q^{2k})\qrfac{1/y, a, bq/A, a/b}{k}}
{(1-a)\qrfac{q, aqy, bq,aA/b}{k}}\big(Ay\big)^k \\
\times
\sum_{j=0}^k
\frac{(1-bq^{2j})\qrfac{q^{-k}, aq^k, A}{j}}{(1-b)\qrfac{bq/A, bq^{k+1}, bq^{1-k}/a}{j}}
 \Big(\frac{bq}{aA}\Big)^j \beta_j.
\end{multline}
The following expansion formula, implicit in  the proof of \cite[Th.\ 1.7]{WM2017}, is in the spirit of \eqref{liu-main1}. 
Suppose both sides of the following are analytic in $y$ in a disk around the origin.
Then
\begin{multline}\label{wang-ma1}
K(y)
\sum_{j=0}^\infty \frac{(1-bq^{2j})\qrfac{1/y}{j}}{(1-b)\qrfac{bqy}{j}} y^j \beta_j 
=\sum_{k=0}^\infty 
\frac{(1-a q^{2k})\qrfac{1/y, a}{k}}
{(1-a)\qrfac{q, aqy}{k}} y^k \\
\times
\sum_{j=0}^k
\frac{\qrfac{q^{-k}, aq^k, bq}{j}}{\qrfac{b}{2j}}
(-1)^j q^{\binom{j+1}2}\beta_j\\
\times 
\sum_{m=0}^{k-j}
\frac{\qrfac{q^{-k+j}, aq^{j+k},bq^{j+1}}{m}}{\qrfac{q, bq^{1+2j}}{m}\qrfac{aq}{j+m}}
q^{m}K\big(q^{j+m}\big).
\end{multline}

Wang and Ma's results contain the aforementioned results of Liu. 
To obtain \eqref{liu-main1}, take 
$$K(y)=\frac{\qrfac{\alpha q,\alpha by}{\infty}}{\qrfac{\alpha qy,\alpha b}{\infty}} f(\alpha qy);
a\mapsto \alpha; \text{ and, } \beta_j = \delta_{j,0}=
\begin{cases} 1, & \text{if } j=0;\\
0, & \text{otherwise},
\end{cases}
$$
in \eqref{wang-ma1}, and then replace $y$ by $a/q$ in the resulting identity. 
Liu's second result
\eqref{liu-gen1} follows from \eqref{wang-ma2}: take
$a\mapsto \alpha, A\mapsto Ab, y\mapsto a/q, b_j\mapsto (\alpha q)^j A_j;$
set $b=0$, and finally replace $A$ by $b$. 

In this paper, we provide several extensions of \eqref{wang-ma2} and \eqref{wang-ma1} to multiple series over root systems. 
The series are all of the form
$$
\sum\limits_{\substack{{k_r\geq 0 } \\  
{r =1,2,\dots, n}}} S(\k)
$$
where $\k=(\kvec )$, and  $k_1, k_2, \dots, k_n$ are non-negative integers. The positive integer $n$ is called the {\em dimension} of the sum. When $n=1$, we refer to the corresponding identity as {\em one-variable}, even though it may have many variables/parameters.  
This type of series is recognized by the presence of the so-called  \lq\lq Vandermonde factor" of type $A$, namely
$$\vandermonde{x}{k}{n}.$$
Some of the series have additional factors that give type $C$ or $D$ Vandermonde factors.
%

To explain why there are several multiple series extensions we need to understand Wang and Ma's approach. In what follows, we have made several changes in their exposition to bring it closer 
to Gessel and Stanton's \cite{GS1983}  explanation.

Let $\F=(F_{km}(a))$ and $\G=(G_{km}(a))$ be  the pair of inverse, infinite, lower triangular matrices, with entries given by 
\begin{subequations}
\begin{align}
F_{km}(a) &:=\label{BaileyF}
\frac{(1-aq^{2m})\qrfac{q^{-k}}{m}}
{\qrfac{aq^{k+1}}{m}} q^{km};\\
\intertext{and}
G_{km}(a) &:=\label{BaileyG}
\frac{\qrfac{q^{-k}}{m}\qrfac{aq^{m}}{k}}
{(1-aq^m)\qrfac{q}{m}\qrfac{q}{k}} q^{m}.
\end{align}
\end{subequations}
The matrix inversion is equivalent to the Bailey matrix inversion (take $p=q$ in
\cite[Th.\ 3.7]{GS1983} 
or $n=1$ in \cite[Th.\ 3.7]{Milne1997}). We obtain \eqref{BaileyF} and \eqref{BaileyG} on multiplication by suitable diagonal matrices. 

Consider 
\begin{equation}\label{fk-1}
f_k(y; a) := \frac{(1-aq^{2k})\qrfac{1/y}{k}}
{\qrfac{aqy}{k}} y^{k} .
\end{equation}
The motivation for $f_k(y;a)$ is that $f_k(q^m;a)=F_{mk}(a)$.

Our goal is to write an expression $A(y)$ (to be prescribed shortly) in the form
\begin{equation}\label{Ay}
A(y)= \sum_{k=0}^\infty f_k(y;a) \alpha_k.\tag{$*$}
\end{equation}
The approach is as follows. Since
$$A(q^N) = \sum_{k=0}^N F_{Nk}(a) \alpha_k,$$
we have, by matrix inversion,
$$\alpha_k = \sum_{m=0}^k G_{km}(a) A(q^m);$$
using this we obtain an expression for $A(q^N)$. The hope is that this expression implies (by analytic continuation) an expression of the form \eqref{Ay} for $A(y)$. We take $A(y)$ to be of the form
\begin{equation}\label{Ayb}
A(y) = K(y)\sum_{j=0}^\infty h_j(y) \beta_j.\tag{$**$}
\end{equation}
So we obtain:
\begin{align}
A(q^N) &= \sum_{k=0}^N F_{Nk}(a) \sum_{m=0}^k G_{km}(a) A(q^m)\cr
&=\sum_{k=0}^N F_{Nk}(a) \sum_{m=0}^k G_{km}(a) K(q^m) \sum_{j=0}^m h_j(q^m) \beta_j\cr
& = \sum_{k=0}^N F_{Nk}(a) \sum_{j=0}^k \beta_j \sum_{m=0}^{k-j} h_j(q^{j+m}) G_{k,j+m}(a) K(q^{j+m}),
\label{wang-ma-laststep}
\end{align}
by interchanging the two inner sums.
One can take any function $h_j(y)$ as long as \eqref{Ayb} is analytic in a disk around the origin. The choice Wang and Ma made is (essentially) $h_j(y)=f_j(y,b)$. With this choice, we find that
$$
A(q^N)=  \sum_{k=0}^N F_{Nk}(a) \sum_{j=0}^k \beta_j \sum_{m=0}^{k-j} F_{j+m,j}(b) G_{k,j+m}(a) K(q^{j+m}) .
$$
After plugging in and simplifying, the resulting identity is \eqref{wang-ma1}, with $y$ replaced by $q^N$. Under the assumption that $K(y)$ and $\beta_j$ are chosen so that both sides of \eqref{wang-ma1} are analytic in $y$
in a disk around the origin, we obtain \eqref{wang-ma1} by analytic continuation. 

To obtain \eqref{wang-ma2},  we choose $K(y)$ in \eqref{wang-ma1} so that the inner sum is summed using the $q$-Pfaff-Saalch{\"u}tz summation \cite[Eq.\ (1.7.2)]{GR90}. This is the sum of a terminating, balanced $_3\phi_2$ sum. 

There are two important observations in the exposition above that we require in our work ahead.
\begin{enumerate}
\item The choice of $K(y)$  is such that  the $q$-Pfaff-Saalch{\"u}tz theorem applies to the inner sum. So the statement of \eqref{wang-ma1} can be generalized but this case is likely most useful due to the fact that it implies \eqref{wang-ma2} and contains \eqref{liu-gen1}.
\item When $\beta_j=\delta_{j,0}$, then \eqref{wang-ma2} reduces to Rogers' $_6\phi_5$ summation~\cite[Eq.\ (2.7.1)]{GR90}. In case we don't apply analytic continuation, it reduces to the terminating very-well-poised $_6\phi_5$ sum \cite[Eq.\ (2.4.2)]{GR90}. 
\end{enumerate}

To return to the description of results in this paper, we recall that there are extensions of the Bailey transform over the root systems $A_n$ and $C_n$ given in, respectively, 
Milne~\cite{Milne1997} and Milne and Lilly~\cite{ML1995}; two non-terminating $_6\phi_5$ summations due to Milne~\cite{Milne1987, Milne1988} and Gustafson~\cite{RG1989}, and more terminating $_6\phi_5$ summations and several balanced  $_3\phi_2$ summations due to Milne~\cite{Milne1997} and the first author \cite{GB1999a}. All of these combine to motivate the several multiple series generalizations presented in this paper. We obtain
three extensions of \eqref{wang-ma1}; and, as special cases, four extensions of \eqref{wang-ma2}. 

This paper is organized as follows. In \S\ref{sec:trans1} we give $A_n$ extensions of \eqref{wang-ma2} and \eqref{wang-ma1}; in \S\ref{sec:cn-dn}, extensions over $C_n$ and $D_n$. In \S\ref{sec:app1}, we show how one can apply these results to find expansions of arbitrary infinite products. This is based on a theorem of Liu extending the non-terminating $_6\phi_5$ summation. In \S\ref{sec:liu3} we provide one example of how to modify our calculations to extend a third formula of Liu, namely, Theorem 9.2 of Liu~\cite{Liu2013c}:
\begin{multline}\label{liu3}
\frac{\qrfac{aq,aAB/q}{N}}{\qrfac{aA, aB}{N}}
\sum_{j=0}^N \frac{\qrfac{q^{-N}, q/A, q/B}{j}}{\qrfac{q^{2-N}/aAB}{j}} q^j A_j  \\
=\sum_{k=0}^N 
\frac{(1-a q^{2k})\qrfac{q^{-N}, a, q/A, q/B}{k}}
{(1-a)\qrfac{q, aq^{N+1}, aA,aB}{k}}\big(aABq^{N-1}\big)^k \\
\times
\sum_{j=0}^k
\big(q^{-k}, aq^k\big)_{j} q^j
 A_j.
\end{multline}
To obtain \eqref{liu3}, we require a slightly different $h_k(y)$. With a suitable choice of $K(y)$, we again use the $q$-Pfaff-Saalch{\"u}tz summation to sum the inner sum. See \S\ref{sec:liu3}. This provides an alternate, simpler, proof of Liu's formula. 

We remark that Liu's three identities mentioned above have not---until now---been brought into the framework of the Bailey transform 
and lemma. Wang and Ma showed some of Liu's results 
using $q$-Lagrange inversion, and that coupled with Gessel and Stanton's well-known observation 
is what prompted us to  examine Liu's work from this point of view. 

\section{$A_n$ expansion and transformation formulas}\label{sec:trans1}
In this section we give our first set of results pertaining to $A_n$ multiple series. We first provide an alternative expression for the 
$A_n$ Bailey transform and use that to motivate the definition of a function extending $f_k(y;a)$. 

In what follows we assume the parameters $a$, $b$, $c$, $A$, $x_k$, for  $k=1, 2, \dots, n$, and  $q$ are all complex numbers with $|q|<1$. We assume in all results below that the parameters are chosen so that the denominators are never $0$. 
We use $\N=(N_1, N_2, \dots, N_n)$, where $N_r$ are non-negative integers; similarly 
$\k$, $\j$, and $\m$ are multi-indices of non-negative integers. We use the notation $\sumk:= \sumvec{k}{n}$ and 
$\j+\m = (j_1+m_1, \dots, j_n+m_n)$ for component-wise addition. We will use the variables
$\y=(y_1, y_2, \dots, y_n)$. We replace  $\y$ by $q^{\N}$ in any formula to indicate  $y_r$ has been replaced by $q^{N_r}$ for $r=1, 2, \dots, n$.   
The multi-indices can be ordered lexicographically; this allows us to 
define matrix multiplication of matrices with rows and columns indexed by multi-indices.

\begin{Proposition}[An alternative expression for the $A_n$ Bailey transform]\label{prop:bailey-inversion}
Let \!
$\F= \big(F_{\k\m}(a)\big)$ and $\G = \big(G_{\k\m}(a)\big)$ be matrices with entries indexed by $(\k , \m)$ defined as 
\begin{subequations}
\begin{align}
F_{\k\m}(a) &:= \label{An-Bailey-F}
\vandermonde{x}{m}{n}  \;
\powerq{m}{n} q^{\sumk\summ} \cr
&\hspace{1cm}
\sqprod{n}\qrfac{q^{-k_s}\xover{x}}{m_r} 
\smallprod n  \frac{1-ax_rq^{m_r+\summ}}{\qrfac{ax_r q^{k_r+1}}{\summ}};
\intertext{and}
G_{\k\m}(a) &:= \label{An-Bailey-G}
\vandermonde{x}{m}{n}  \;
 q^{\sum_{r=1}^n r m_r} \cr
&\hspace{1cm}
\sqprod{n}\frac{\qrfac{q^{-k_s}\xover{x}}{m_r} }{\qrfac{q\xover{x}}{k_r}\qrfac{q\xover{x}}{m_r}}
\smallprod n  \frac{\qrfac{ax_r q^{m_r}}{\sumk}}{1-ax_rq^{m_r}}.
\end{align}
\end{subequations}
Then $\F$ and $\G$ are inverses of each other.
\end{Proposition}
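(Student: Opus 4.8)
The plan is to derive the Proposition from the $A_n$ Bailey matrix inversion of Milne \cite[Theorem~3.7]{Milne1997}, in exactly the way \eqref{BaileyF}--\eqref{BaileyG} were obtained from its $n=1$ specialization: by rescaling rows and columns. The underlying observation is elementary: if the matrices $\bigl(F^{\mathrm M}_{\k\m}(a)\bigr)$ and $\bigl(G^{\mathrm M}_{\k\m}(a)\bigr)$, with rows and columns indexed by multi-indices of non-negative integers, are inverse to each other, and if $d_\k=d_\k(a)$ and $e_\k=e_\k(a)$ are nonzero scalars, then the matrices with entries $d_\k F^{\mathrm M}_{\k\m}(a)e_\m$ and $e_\k^{-1}G^{\mathrm M}_{\k\m}(a)d_\m^{-1}$ are again inverse to each other, since with $\D:=\operatorname{diag}(d_\k)$ and $\boldsymbol E:=\operatorname{diag}(e_\k)$ we get $(\D\F^{\mathrm M}\boldsymbol E)(\boldsymbol E^{-1}\G^{\mathrm M}\D^{-1})=\D\D^{-1}=I$, and likewise in the opposite order. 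No convergence question arises: both $F_{\k\m}(a)$ and $G_{\k\m}(a)$ contain the factor $\qrfac{q^{-k_r}}{m_r}$ from the $r=s$ term of $\sqprod{n}$, which vanishes as soon as some $m_r>k_r$, so all matrices here are triangular for the componentwise order and every product reduces to a finite sum.

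First I would record Milne's inversion in its native normalization as such a pair $\bigl(F^{\mathrm M}_{\k\m}(a),G^{\mathrm M}_{\k\m}(a)\bigr)$, and then exhibit scalars $d_\k(a)$ and $e_\m(a)$ for which $d_\k F^{\mathrm M}_{\k\m}(a)e_\m$ equals the right-hand side of \eqref{An-Bailey-F} and, for the \emph{same} $d$ and $e$, $e_\k^{-1}G^{\mathrm M}_{\k\m}(a)d_\m^{-1}$ equals the right-hand side of \eqref{An-Bailey-G}. The structure of \eqref{An-Bailey-F}--\eqref{An-Bailey-G} dictates where to look: $e_\m$ should absorb the pieces depending on $\m$ alone (the type-$A$ Vandermonde numerator $\triprod{n}(1-q^{m_r-m_s}\xover{x})$, the monomial $\powerq{m}{n}$, suitable powers of $\qrfac{q\xover{x}}{m_r}$, and a $q$-power linear in the $m_r$), $d_\k$ should absorb the pieces depending on $\k$ alone, while the ``cross'' ingredients --- the symbols $\qrfac{q^{-k_s}\xover{x}}{m_r}$, the very-well-poised quotient $\smallprod{n}(1-ax_rq^{m_r+\summ})/(1-ax_rq^{m_r})$, and the $q^{\sumk\summ}$-type powers --- are precisely those already carried by Milne's kernel, so they must match up automatically once $d$ and $e$ are fixed.

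What then remains is a purely computational check: rewrite the symbols $\qrfac{q^{-k}}{m}$ and $\qrfac{q^{-k_s}\xover{x}}{m_r}$ using $\qrfac{q^{-k}}{m}=(-1)^m q^{\binom m2-km}\qrfac{q}{k}\big/\qrfac{q}{k-m}$ and its evident variant, collect the resulting powers of $q$ and signs, and keep track of the index reversal $\m\mapsto\k-\m$ built into the $A_n$ Bailey transform. I expect this bookkeeping --- not any conceptual point --- to be the main obstacle: one must confirm that the asymmetric-looking factors $q^{\sumk\summ}$ in \eqref{An-Bailey-F} and $q^{\sum_{r=1}^{n}r\,m_r}$ in \eqref{An-Bailey-G} are exactly what survives the rescalings, with no residual sign or stray power of $q$, and that one and the same pair $(d,e)$ does the job simultaneously in \eqref{An-Bailey-F} and \eqref{An-Bailey-G}. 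Once these two componentwise identities are verified, the Proposition follows from the observation in the first paragraph.

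If the rescaling reduction proves awkward to present, an alternative is to verify directly that $\sum_{\m}F_{\k\m}(a)G_{\m\j}(a)=\delta_{\k\j}$: with $\j\leq\k$ fixed and \eqref{An-Bailey-F}--\eqref{An-Bailey-G} substituted, the inner multiple sum over $\m$ with $\j\leq\m\leq\k$ collapses --- after the same $q$-shifted-factorial reductions --- to an $A_n$ terminating very-well-poised ${}_6\phi_5$ summation of Milne \cite{Milne1997}, or equivalently, after contiguous-relation reductions, to an $A_n$ terminating balanced ${}_3\phi_2$ summation; this vanishes unless $\k=\j$ and equals $1$ when $\k=\j$. Triangularity then upgrades this one-sided relation to the full inverse statement.
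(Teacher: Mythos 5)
Your proposal is correct and takes essentially the same route as the paper: the paper also deduces the Proposition from Milne's $A_n$ Bailey matrix inversion \cite[Th.~3.3]{Milne1997} by row/column rescaling, setting $\F=\D\M\D^{\prime}$ and $\G={\D^{\prime}}^{-1}\M^{*}\D^{-1}$ for explicit diagonal matrices $\D,\D^{\prime}$ (after rewriting Milne's entries with his Lemma~4.3). The only difference is that the paper writes down the diagonal entries and the needed $q$-factorial rewriting explicitly, whereas you leave these scalars to be determined by the bookkeeping you describe.
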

\begin{Remark} The entries $F_{\k\m}(a)$ and $G_{\k\m}(a)$ are $0$ unless $m_r\le k_r$, for $r=1,2,\dots, n.$ This follows from
$$\qrfac{q^{-k}}{m} = 0 \text{ whenever } m>k.$$
Thus, under the lexicographic ordering of the multi-indices, the matrices are lower-triangular. 
\end{Remark}
\begin{proof} The proposition is an equivalent formulation of Milne~\cite[Th.\ 3.3]{Milne1997}. It is obtained by multiplying the matrices in Milne's formulation by suitable diagonal matrices. The details are as follows. 

Let $\M$ and $\M^*$ be the matrices with entries given in Milne~\cite[eq.\ (3.2a) and (3.2b)]{Milne1997}. We 
take the $a\mapsto ax_n$ case of the entries, and rewrite them using the useful computation \cite[Lemma~4.3]{Milne1997}:
\begin{multline}\label{magiclemma2}
\sqprod n 
\frac{1}{\qrfac{q^{1+m_r-m_s}\xover x }{k_r-m_r}} =
\vandermonde{x}{m}{n} \cr
\sqprod n \frac{\qrfac{q^{-k_s}\xover{x}}{m_r} }{\qrfac{q\xover{x}}{k_r} } 
(-1)^{\summ} q^{\sumk\summ-\binom{\summ}2 }q^{\sum\limits_{r=1}^n (r-1)m_r}
.
\end{multline} 
Denote the resulting matrices by $\M$ and $\M^*$ too. Consider the
matrices
$
\F = \D \M \D^\prime$  and 
$\G = {\D^\prime}^{-1} \M^* \D^{-1} ,$
where the diagonal entries of the diagonal matrices $\D$ and $\D^\prime$ are given by
\begin{align*}
D_{\k\k} &= \smallprod n \qrfac{ax_rq}{k_r} \sqprod n \qrfac{q\xover{x} }{k_r}\cr
\intertext{and}
D^\prime_{\m\m} &= (-1)^{\summ} q^{\binom{\summ}{2}}\smallprod n \big(1-ax_r q^{m_r+\summ}\big).
\end{align*}
Note that multiplying the matrix $\M$ by a diagonal matrix on the left multiplies each entry of $\M$ by a factor depending only on the row index 
$\k$, and multiplying by a diagonal matrix on the right multiplies each entry by a factor depending on the column index $\m$. In this manner we obtain the entries of $\F$ and $\G$ given in \eqref{An-Bailey-F}
and \eqref{An-Bailey-G}, respectively. 

Since $\M$ and $\M^*$ are inverses (by \cite[Th.\ 3.3]{Milne1997}), so are $\F$ and $\G$. 
\end{proof}

Let $f_{\k}(\y;a)$ be 
defined as
\begin{multline}\label{fk}
f_{\k}(\y;a):= 
\vandermonde{x}{k}{n} 
\sqprod{n}\qrfac{\xover{x}y_s}{k_r} \\
\smallprod n \frac{1-ax_rq^{k_r+\sumk}}{\qrfac{aqx_ry_r}{\sumk}}
(y_1y_2\dots y_n)^\sumk 
\powerq{k}{n}. 
\end{multline}
The motivation is that  $f_{\k}(q^{\m})$ reduces to the entries  of the $A_n$ Bailey transform in the form $F_{\m\k}(a)$ given in \eqref{An-Bailey-F}; further, it is analytic in $\y=(\yvec)$.

\begin{Theorem}[A multiple series extension of \eqref{wang-ma1}]\label{th:an-trans1} Let $K(\y)$ and $\beta(\j)$ be chosen so that the series on both sides are analytic in $y_1, y_2, \dots, y_n$ and converge absolutely for each $y_r$ in a disk  around the origin; or, $y_r=q^{N_r}$, for $r=1, 2, \dots, n$, and so the series terminate.  Then
\begin{align}\label{an-trans1}
K(\y)
\multisum{j}{n} &
\vandermonde{x}{j}{n} 
\sqprod{n}\qrfac{\xover{x}y_s}{j_r} \cr
& 
\smallprod n \frac{1-bx_rq^{j_r+\sumj}}{(1-bx_r)\qrfac{bqx_ry_r}{\sumj}} 
 (y_1y_2\dots y_n)^\sumj 
 \powerq{j}{n}
\beta(\j) \cr
=
\multisum{k}{n} &
\vandermonde{x}{k}{n} 
\sqprod{n}\frac{\qrfac{\xover{x}y_s}{k_r} }{\qrfac{q\xover{x}}{k_r} }\cr
&\smallprod n \frac{1-ax_rq^{k_r+\sumk}}{1-ax_r} 
\smallprod n \frac{\qrfac{ax_r}{\sumk}}{\qrfac{aqx_ry_r}{\sumk}}
(y_1y_2\dots y_n)^\sumk \powerq{k}{n} \cr
&\times 
\multsum{j}{k}{r}
\vandermonde{x}{j}{n} 
\sqprod{n}\qrfac{q^{-k_s}\xover{x}}{j_r} \cr
&\hspace{1.75cm}
\smallprod n \frac{\qrfac{ax_rq^{\sumk}, bqx_r}{j_r}}
{\qrfac{bx_r}{j_r+\sumj}}
(-1)^{\sumj} q^{\suml_{r=1}^{n} rj_r + \binom{\sumj}{2}} \beta(\j)\cr
&\times
\underset{r=1, \dots, n}{\sum\limits_{0\leq m_r\leq k_r-j_r}}
\triprod n \frac{1-q^{j_r-j_s+m_r-m_s} \xover {x} }{1-q^{j_r-j_s}  \xover{x}}\;
q^{\suml_{r=1}^n rm_r} K(q^{\j+\m})
\cr
&\hspace{2.25cm}
\sqprod{n}\frac{\qrfac{q^{j_r-k_s}\xover{x}}{m_r} }{\qrfac{q^{1+j_r-j_s}\xover{x}}{m_r} }
\smallprod n \frac{\qrfac{ax_rq^{j_r+\sumk}, bx_rq^{1+j_r}}{m_r}}
{\qrfac{aqx_r}{j_r+m_r}\qrfac{bx_rq^{1+j_r+\sumj}}{m_r}}
.\cr
\end{align}
\end{Theorem}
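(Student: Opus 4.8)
The plan is to run the Bailey-transform mechanism of Gessel--Stanton and Wang--Ma recalled in \S\ref{sec:intro}, now driven by the $A_n$ Bailey matrix inversion of Proposition~\ref{prop:bailey-inversion} in place of the classical one. First I would let $A(\y)$ denote $K(\y)$ times the multiple sum on the left-hand side of \eqref{an-trans1}; comparing with \eqref{fk}, this is exactly $A(\y)=K(\y)\sum_{\j}h_{\j}(\y)\,\beta(\j)$ with $h_{\j}(\y)=f_{\j}(\y;b)\prod_{r=1}^{n}(1-bx_r)^{-1}$, a rescaling of $f_{\j}$ that is independent of $\j$ and therefore merely rides along. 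The function $f_{\j}$ was designed so that evaluation at $\y=q^{\N}$ returns the Bailey entry $F_{\N\j}(b)$ (a direct comparison of \eqref{fk} with \eqref{An-Bailey-F}), so $A(q^{\N})=K(q^{\N})\prod_{r=1}^{n}(1-bx_r)^{-1}\sum_{\j}F_{\N\j}(b)\,\beta(\j)$.

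Next, since likewise $f_{\k}(q^{\m};a)=F_{\m\k}(a)$, writing $A(q^{\N})=\sum_{\k}F_{\N\k}(a)\,\alpha_{\k}$ and inverting via Proposition~\ref{prop:bailey-inversion} forces $\alpha_{\k}=\sum_{\m}G_{\k\m}(a)\,A(q^{\m})$, whence
$$A(q^{\N})=\frac{1}{\prod_{r=1}^{n}(1-bx_r)}\multisum k n F_{\N\k}(a)\multisum m n G_{\k\m}(a)\,K(q^{\m})\sum_{\j}F_{\m\j}(b)\,\beta(\j).$$
Because $F_{\m\j}(b)=0$ unless $j_r\le m_r$ for every $r$ and $G_{\k\m}(a)=0$ unless $m_r\le k_r$ for every $r$, all of these sums are finite, so I may interchange the $\m$- and $\j$-summations freely and then reindex $\m=\j+\m'$. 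This recasts the inner double sum as $\prod_{r=1}^{n}(1-bx_r)^{-1}\sum_{\j}\beta(\j)\sum_{0\le m'_r\le k_r-j_r}F_{\j+\m',\j}(b)\,G_{\k,\j+\m'}(a)\,K(q^{\j+\m'})$, which already exhibits the three-layer shape ($\k$, then $\j$, then $\m'$) of the right-hand side of \eqref{an-trans1}.

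The remaining, and main, task is the explicit algebraic reduction: substitute \eqref{An-Bailey-F} for $F_{\j+\m',\j}(b)$ (with $a\mapsto b$) and \eqref{An-Bailey-G} for $G_{\k,\j+\m'}(a)$, multiply out, and check that the product splits cleanly into a factor depending on $\k$ and $\y=q^{\N}$ that reassembles into $F_{\N\k}(a)$ (the $\k$-summand on the right of \eqref{an-trans1}), a factor depending on $\j$, and a factor depending on $\m'$, matching \eqref{an-trans1} term by term. The individual tools are routine: the splittings $\qrfac{c}{j_r+m'_r}=\qrfac{c}{j_r}\qrfac{cq^{j_r}}{m'_r}$ applied to every $q$-shifted factorial in \eqref{An-Bailey-F}--\eqref{An-Bailey-G}; the type-$A$ Vandermonde splitting
$$\frac{1-q^{(j_r+m'_r)-(j_s+m'_s)}\xover x}{1-\xover x}=\frac{1-q^{j_r-j_s}\xover x}{1-\xover x}\cdot\frac{1-q^{(j_r-j_s)+(m'_r-m'_s)}\xover x}{1-q^{j_r-j_s}\xover x},$$
which manufactures the inner Vandermonde factor appearing in the innermost sum of \eqref{an-trans1}; and collecting the scattered powers of $q$ and the sign factors into $q^{\sum_{r}rj_r+\binom{\sumj}{2}}$ at the $\j$-layer and $q^{\sum_{r}rm_r}$ at the $\m$-layer. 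The bookkeeping, however, is heavy, and the cross-terms---for instance the mixed products $\sqprod n\qrfac{q^{j_r-k_s}\xover x}{m'_r}$ and $\sqprod n\qrfac{q^{1+j_r-j_s}\xover x}{m'_r}$ (the latter in a denominator), together with the $q$-powers that entangle the three layers---are disentangled only by an appeal to Milne's computational identity in the form \eqref{magiclemma2} (i.e.\ \cite[Lemma~4.3]{Milne1997}). After renaming $\m'\to\m$ one reads off exactly the right-hand side of \eqref{an-trans1} with $\y=q^{\N}$. I expect this reduction---specifically, getting every $q$-power, every sign, and every Vandermonde-type factor to land at the correct one of the three summation layers---to be the only real obstacle; the rest is the standard mechanism.

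Finally I would lift the restriction $\y=q^{\N}$. By hypothesis both sides of \eqref{an-trans1} are analytic in each $y_r$ on a disk about the origin and converge absolutely there, and we have just shown they agree whenever $y_r=q^{N_r}$ for all $r$. Holding $y_2,\dots,y_n$ fixed at such values and varying $y_1$, the two sides become analytic functions of $y_1$ agreeing on $\{q^{N_1}:N_1\ge0\}$, a set with accumulation point $0$, hence they agree throughout the $y_1$-disk; iterating one variable at a time yields \eqref{an-trans1} for all $\y$ in the polydisk. In the alternative case $y_r=q^{N_r}$ the series terminate, so the identity established above is already the full statement and no continuation is needed.
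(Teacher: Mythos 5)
Your proposal is correct and follows essentially the same route as the paper: expand $A(\y)=K(\y)\sum_{\j}f_{\j}(\y;b)\beta(\j)$ in the basis $f_{\k}(\y;a)$, use the $A_n$ Bailey inversion of Proposition~\ref{prop:bailey-inversion} at $\y=q^{\N}$, interchange and reindex the inner sums, simplify (the paper likewise leaves the heavy bookkeeping to \eqref{elem1}, \eqref{magiclemma2} and \cite[Lemma~3.12]{Milne1997}), and finish by the Identity Theorem in each $y_r$. The only cosmetic difference is that you absorb the factor $\prod_{r=1}^{n}(1-bx_r)^{-1}$ into $h_{\j}$ at the outset, whereas the paper divides both sides by $\prod_{r=1}^{n}(1-bx_r)$ at the end.
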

\begin{proof}
We will explain the proof symbolically, keeping the explicit details to a minimum. The details involve a large number of routine simplifications, some of which are mentioned below.  

We begin with an expression $A(\y)$ which we want to write in the form
$$A(\y)=\multisum{k}{n} f_{\k}(\y;a) \alpha(\k)$$
with parameter $a$. 
When $y_r=q^{N_r}$, for $r=1, 2, \dots, n$,
$$A(q^{\N}) = \multsum{k}{N}{r} F_{\N\k}(a) \alpha(\k). $$
The matrix inversion in Proposition~\ref{prop:bailey-inversion} implies the inverse relation:
$$\alpha(\k) = \multsum{m}{k}{r} G_{\k\m}(a) A(q^{\m}), $$
so
$$A(q^{\N})=\multsum{k}{N}{r} F_{\N\k}(a) \multsum{m}{k}{r} G_{\k\m} A(q^{\m})$$
We now assume that 
\begin{equation*}
A(\y):= K(y)\multisum{j}{n} f_{\j}(\y;b) \beta(\j)
\end{equation*}
containing a parameter $b$.
Using this form in the above, we obtain:
$$A(q^{\N})=\multsum{k}{N}{r} F_{\N\k}(a) \multsum{m}{k}{r} G_{\k\m} (a)
K(q^{\m}) \multsum{j}{m}{r} f_{\j}(q^{\m};b) \beta(\j). 
$$
Next interchange the inner sums using 
$$\multsum{m}{k}{r} \; \multsum{j}{m}{r} S(\m,\j) =
\multsum{j}{k}{r} \;
\underset{r=1, \dots, n}{\sum\limits_{0\leq m_r\leq k_r-j_r}} S(\j+\m,\j).$$
In the resulting expression, we divide both sides by
$$\smallprod n (1-bx_r)$$
to obtain
\begin{align}\label{trans1-terminating}
K(q^{\N})
\multsum{j}{N}{r} &
\vandermonde{x}{j}{n} 
\sqprod{n}\qrfac{q^{-N_s}\xover{x}}{j_r} \cr
& 
\smallprod n \frac{1-bx_rq^{j_r+\sumj}}{(1-bx_r)\qrfac{bx_rq^{1+N_r}}{\sumj}} 
 q^{\sumj\sumN} 
 \powerq{j}{n}
\beta(\j) \cr
=
\multsum{k}{N}{r} &
\vandermonde{x}{k}{n} 
\sqprod{n}\frac{\qrfac{q^{-N_s}\xover{x}}{k_r} }{\qrfac{q\xover{x}}{k_r} }\cr
&\smallprod n \frac{1-ax_rq^{k_r+\sumk}}{1-ax_r} 
\smallprod n \frac{\qrfac{ax_r}{\sumk}}{\qrfac{ax_rq^{1+N_r}}{\sumk}}
q^{\sumk\sumN} \powerq{k}{n} \cr
&\times 
\multsum{j}{k}{r}
\vandermonde{x}{j}{n} 
\sqprod{n}\qrfac{q^{-k_s}\xover{x}}{j_r} \cr
&\hspace{1.75cm}
\smallprod n \frac{\qrfac{ax_rq^{\sumk}, bqx_r}{j_r}}
{\qrfac{bx_r}{j_r+\sumj}}
(-1)^{\sumj} q^{\suml_{r=1}^{n} rj_r + \binom{\sumj}{2}} \beta(\j)\cr
&\times
\underset{r=1, \dots, n}{\sum\limits_{0\leq m_r\leq k_r-j_r}}
\triprod n \frac{1-q^{j_r-j_s+m_r-m_s} \xover {x} }{1-q^{j_r-j_s}  \xover{x}}\;
q^{\suml_{r=1}^n rm_r} K(q^{\j+\m})
\cr
&\hspace{2.25cm}
\sqprod{n}\frac{\qrfac{q^{j_r-k_s}\xover{x}}{m_r} }{\qrfac{q^{1+j_r-j_s}\xover{x}}{m_r} }
\smallprod n \frac{\qrfac{ax_rq^{j_r+\sumk}, bx_rq^{1+j_r}}{m_r}}
{\qrfac{aqx_r}{j_r+m_r}\qrfac{bx_rq^{1+j_r+\sumj}}{m_r}}
.\cr
\end{align}
This is \eqref{an-trans1} with $y_r=q^{N_r}$, for $r=1, 2, \dots, n$, and proves the result when the sums are terminating.
Next assume $A(\y)$ is  analytic in $y_r$  in a domain of the form
$|y_r|\le \epsilon_r$ containing $0$ (for $r=1, 2, \dots, n$). Since \eqref{trans1-terminating} shows  \eqref{an-trans1} is true for
$y_r=q^{N_r}$ for $r=1, 2, \dots, n$ in turn for $N_r=0, 1, 2, \dots$, and $|q|<1$, it is true in the domain of analyticity by the Identity Theorem. 

%
%
%

We have omitted a large number of elementary computations. For example, we require
\begin{align}\label{elem1}
&\sqprod{n}\frac{\qrfac{q^{-k_s}\xover{x}}{j_r+m_r}
 \qrfac{q^{-m_s-j_s}\xover{x}}{j_r} }
 {\qrfac{q\xover{x}}{j_r+m_r} }
= \cr
&\hspace{1cm}
\sqprod{n}\frac{\qrfac{q^{-k_s}\xover{x}}{j_r} 
\qrfac{q^{j_r-k_s}\xover{x}}{m_r} }
{
\qrfac{q^{1+j_r-j_s}\xover{x}}{m_r} }\cr
&\hspace{1.5cm}
\triprod n \frac{1-\xover x}{1-q^{j_r-j_s}\xover x}
(-1)^\sumj q^{-\suml_{r=1}^n (r-1)j_r-\sumj\summ-\sumj^2+\binom{\sumj}2}.
\end{align}
The calculations require elementary identities from \cite[Appendix~1]{GR90}, and the helpful simplification \cite[Lemma~3.12]{Milne1997}:
\begin{multline*}
\sqprod n 
{\qrfac{q\xover x }{j_r-j_s}} =
\vandermonde{x}{j}{n}  
(-1)^{(n-1)\sumj} \cr
\times q^{\suml_{r=1}^n (r-1)j_r+
 n\suml_{r=1}^n\binom{j_r}{2} -\binom{\sumj}2 } \smallprod n x_r^{nj_r-\sumj}
.
\end{multline*}

\end{proof}

The next idea is to choose $K(\y)$ in such a way that the inner sum is summable by a terminating, $A_n$ balanced
$_3\phi_2$ summation result. We will use a result of Milne to find the following.

\begin{Theorem}[An $A_n$ extension of \eqref{wang-ma2}]\label{th:an-result5a} 
Let $a$, $A$, $b$, $x_1,x_2,\dots, x_n$ and $\beta(\j)$ be such that the 
series on both sides are analytic in $y_1, y_2, \dots, y_n$ and converge absolutely for each $y_r$ in a disk  around the origin; or, $y_r=q^{N_r}$, for $r=1, 2, \dots, n$, and so the series terminate.
 Then we have
\begin{align}\label{an-result5a}
\frac{\qrfac{A, aAy_1\dots y_n/b}{\infty}}{\qrfac{aA/b, Ay_1\dots y_n}{\infty}} &
\smallprod n \frac{\qrfac{bqx_ry_r, aqx_r}{\infty}}{\qrfac{aqx_ry_r, bqx_r}{\infty}} \cr
\multisum{j}{n} &
\vandermonde{x}{j}{n}  
\sqprod{n}\qrfac{\xover{x}y_s}{j_r} \cr
&\smallprod n \frac{1-bx_rq^{j_r+\sumj}}{(1-bx_r)\qrfac{bqx_ry_r}{\sumj}}
(y_1y_2\dots y_n)^\sumj\powerq{j}{n} \beta(\j) \cr
=
\multisum{k}{n} &
\vandermonde{x}{k}{n} 
\sqprod{n}\frac{\qrfac{\xover{x}y_s}{k_r} }{\qrfac{q\xover{x}}{k_r} }\cr
&\smallprod n \frac{1-ax_rq^{k_r+\sumk}}{1-ax_r} 
\smallprod n \frac{\qrfac{ax_r}{\sumk} \qrfac{bqx_r/A}{k_r}}{\qrfac{aqx_ry_r}{\sumk}\qrfac{bqx_r}{k_r}} \cr
& \frac{\qrfac{a/b}{\sumk}}{\qrfac{aA/b}{\sumk}}
(Ay_1y_2\dots y_n)^\sumk \powerq{k}{n} \cr
&\times 
\multsum{j}{k}{r}
\vandermonde{x}{j}{n} 
\sqprod{n}\qrfac{q^{-k_s}\xover{x}}{j_r} \cr
&\hspace{2cm}
\smallprod n \frac{(1-bx_rq^{j_r+\sumj})\qrfac{ax_rq^{\sumk}}{j_r}}
{
(1-bx_r)\qrfac{bqx_r/A}{j_r} \qrfac{bx_rq^{k_r+1}}{\sumj}}\cr
&\hspace{2cm}
\frac{\qrfac{A}{\sumj}}{\qrfac{bq^{1-\sumk}/a}{\sumj}}
 q^{\suml_{r=1}^{n} (r-1)j_r } \Big(\frac{bq}{aA}\Big)^{\sumj}\beta(\j)
.
\end{align}
\end{Theorem}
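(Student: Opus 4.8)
The plan is to specialize the free function $K(\y)$ in Theorem~\ref{th:an-trans1} so that the innermost sum over $\m$ in \eqref{an-trans1} can be summed in closed form by a terminating, balanced $_3\phi_2$ summation of type $A_n$. The choice is forced by the prefactor standing in front of the $\j$-sum on the left of \eqref{an-result5a}, namely
\[
K(\y) := \frac{\qrfac{A, aAy_1\dots y_n/b}{\infty}}{\qrfac{aA/b, Ay_1\dots y_n}{\infty}}\,\smallprod n \frac{\qrfac{bqx_ry_r, aqx_r}{\infty}}{\qrfac{aqx_ry_r, bqx_r}{\infty}},
\]
which is analytic in each $y_r$ near the origin whenever the parameters avoid the forbidden zeros, so that, together with the hypothesis on $\beta(\j)$, the analyticity assumptions of Theorem~\ref{th:an-trans1} are in force.

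First I would evaluate $K$ at $\y = q^{\j+\m}$. Using $\qrfac{X}{\infty}/\qrfac{Xq^\ell}{\infty} = \qrfac{X}{\ell}$, every infinite product collapses: the factors $\smallprod n \qrfac{aqx_r}{\infty}/\qrfac{aqx_ry_r}{\infty}$ become $\smallprod n \qrfac{aqx_r}{j_r+m_r}$ and cancel the denominator $\smallprod n \qrfac{aqx_r}{j_r+m_r}$ already present in the $\m$-summand of \eqref{an-trans1}; the factors $\smallprod n \qrfac{bqx_ry_r}{\infty}/\qrfac{bqx_r}{\infty}$ become $\smallprod n 1/\qrfac{bqx_r}{j_r+m_r}$; and the $A$-products become $\qrfac{A}{\sumj+\summ}/\qrfac{aA/b}{\sumj+\summ}$. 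Splitting each of these along $\sumj+\summ$, respectively $j_r+m_r$, in the usual way — for instance $\qrfac{aqx_r}{j_r+m_r} = \qrfac{aqx_r}{j_r}\qrfac{ax_rq^{1+j_r}}{m_r}$ — and pulling every factor depending only on $\j$ and $\k$ out past the $\m$-sum, the remaining inner sum becomes an $A_n$ multiple series carrying a type-$A$ Vandermonde factor, the power $q^{\sum_{r=1}^n rm_r}$, the ratios $\qrfac{q^{j_r-k_s}x_r/x_s}{m_r}/\qrfac{q^{1+j_r-j_s}x_r/x_s}{m_r}$, the one-dimensional factors $\qrfac{ax_rq^{j_r+\sumk}}{m_r}/\qrfac{bx_rq^{1+j_r+\sumj}}{m_r}$, and the $\summ$-indexed factor $\qrfac{Aq^{\sumj}}{\summ}/\qrfac{aAq^{\sumj}/b}{\summ}$. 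Writing this sum in the parameters $\xi_r := q^{j_r}x_r$ absorbs the $\j$-shifts: the off-diagonal denominators become $\qrfac{q\xi_r/\xi_s}{m_r}$, the terminating factors become $\qrfac{q^{-(k_s-j_s)}\xi_r/\xi_s}{m_r}$, the Vandermonde factor becomes $\triprod n (1-q^{m_r-m_s}\xi_r/\xi_s)/(1-\xi_r/\xi_s)$, and the one-dimensional factors become $\qrfac{a\xi_rq^{\sumk}}{m_r}/\qrfac{b\xi_rq^{1+\sumj}}{m_r}$. This is precisely an instance of Milne's $A_n$ terminating balanced $_3\phi_2$ summation \cite{Milne1997} (see also \cite{GB1999a}): the balancing hypothesis becomes, under the substitution, an identity among exponents of $q$, and the summation evaluates the inner sum to a closed product in $a$, $b$, $A$, $x_r$, $k_r$, $j_r$, $\sumk$ and $\sumj$.

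Plugging the evaluated $\m$-sum back and recombining it with the $\k$-level and $\j$-level factors of \eqref{an-trans1}, one is left with a routine simplification of the same flavour as those already performed in the proof of Theorem~\ref{th:an-trans1} — using elementary $q$-rising-factorial identities from \cite[Appendix~1]{GR90} together with Milne's product lemmas \cite[Lemmas~3.12 and 4.3]{Milne1997} for the Vandermonde-type products, in the spirit of \eqref{elem1}. After this one should recover precisely the right-hand side of \eqref{an-result5a} with $\y$ replaced by $q^{\N}$; in particular the extra $\k$-level factors $\smallprod n \qrfac{bqx_r/A}{k_r}/\qrfac{bqx_r}{k_r}$, $\qrfac{a/b}{\sumk}/\qrfac{aA/b}{\sumk}$ and $A^{\sumk}$, and the extra $\j$-level factors $1/\qrfac{bqx_r/A}{j_r}$, $1/\qrfac{bq^{1-\sumk}/a}{\sumj}$ and $(bq/aA)^{\sumj}$, all arise from the closed form of the inner sum. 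This establishes \eqref{an-result5a} in the terminating case $y_r=q^{N_r}$; the analytic case then follows by the Identity Theorem applied one variable at a time, exactly as at the end of the proof of Theorem~\ref{th:an-trans1}, since $K(\y)$ is analytic near the origin and both sides are assumed analytic there.

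I expect the main obstacle to be the middle step: identifying the precise $A_n$ balanced $_3\phi_2$ summation among those of Milne, matching its parameters to $q^{-(k_s-j_s)}\xi_r/\xi_s$, $a\xi_rq^{\sumk}$, $b\xi_rq^{1+\sumj}$ and $Aq^{\sumj}$, and then checking that the balancing relation holds after the specialization. Once the correct summation is pinned down, the rest is lengthy but mechanical $q$-series bookkeeping of the kind the authors have already illustrated. A secondary point requiring care is to confirm that this $K$, together with the stated hypotheses on $\beta(\j)$, really does make both sides of \eqref{an-result5a} absolutely convergent and analytic near the origin, so that the interchange of summations inherited from \eqref{an-trans1} and the concluding analytic continuation are both legitimate.
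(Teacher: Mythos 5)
Your proposal is correct and follows essentially the same route as the paper: the same choice of $K(\y)$ is inserted into Theorem~\ref{th:an-trans1}, and the inner $\m$-sum (after the shift $x_r\mapsto x_rq^{j_r}$, $N_r=k_r-j_r$, $a\mapsto aq^{\sumk}$, $b\mapsto Aq^{\sumj}$, $c\mapsto bq^{\sumj+1}$) is evaluated by Milne's $A_n$ terminating balanced $_3\phi_2$ summation \eqref{an3p2-1}, followed by routine simplification and the same analytic-continuation step. Your parameter identification $\qrfac{q^{j_r-k_s}x_r/x_s}{m_r}$, $\qrfac{ax_rq^{j_r+\sumk}}{m_r}$, $\qrfac{bx_rq^{1+j_r+\sumj}}{m_r}$, $\qrfac{Aq^{\sumj}}{\summ}/\qrfac{aAq^{\sumj}/b}{\summ}$ is exactly the specialization the paper uses, so the one step you flag as an obstacle is already resolved by \cite[Theorem~4.1]{Milne1997}.
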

\begin{Remarks} \ 
\begin{enumerate}
\item An $A_n$ non-terminating, very-well-poised $_6\phi_5$ summation is contained in 
 \eqref{an-result5a}. Take
$$\beta_{j}=\prod_{r=1}^n \delta_{j_r,0}, b\mapsto a/b, y_r\mapsto 1/c_r, \text{ and  } A\mapsto aq/bd$$
to obtain an equivalent form of 
a theorem  of Milne~\cite{Milne1987} (see~\cite[Th.\ A.3]{MN2012}).
\item For convergence we use the multiple power series test. For a discussion on convergence of multiple series and examples, see Milne~\cite{Milne1997} and Schlosser \cite{MS2005}. As an example, let $\epsilon_k$ be the $n$-tuple with all entries $0$ except the $k$th which is $1$. Suppose $\beta(\j)$ is such that $|\beta(\j + \epsilon_k)/\beta(\j)|\le1$ for all $k$.  Then the multiple power series test gives the following convergence conditions:  $|y_1y_2\dots y_n|<1$, $|Ay_1y_2\dots y_n|<1$, and, $|bq/aA|<1$. 
\item In the proof below, we need to choose $K(\y)$ appropriately so that the left hand side of 
\eqref{th:an-trans1} is analytic in $y_r$ around $0$ for $r=1, 2, \dots, n$. For this, we keep in mind that products of the form
$\qrfac{A}{\infty}/\qrfac{Ay}{\infty}$ and $\qrfac{Ay}{\infty}/\qrfac{A}{\infty}$  are analytic in $y$ 
(provided $|Ay|<1$) by appealing to the $q$-binomial theorems given in equations (1.3.15) and 
(1.3.16) of Gasper and Rahman \cite{GR90}.
\end{enumerate}
\end{Remarks}
\begin{proof} We choose $K(\y)$ in Theorem~\ref{th:an-trans1} so that the inner sum is summable using an $A_n$ balanced $_3\phi_2$ summation formula due to Milne~\cite[Theorem 4.1]{Milne1997}:
\begin{align}\label{an3p2-1}
\frac{\qrfac{c/a}{\sumN}}{\qrfac{c/ab}{\sumN}} &
\smallprod n \frac{\qrfac{cx_r/b}{N_r}}{\qrfac{cx_r}{N_r}} \cr
&=
\multsum{k}{N}{r} 
\vandermonde{x}{k}{n} 
\sqprod{n} \frac{\qrfac{q^{-N_s}x_r/x_s}{k_r}}{\qrfac{qx_r/x_s}{k_r}} \cr
&\hspace{1.5 cm}\times 
\smallprod n \frac{\qrfac{ax_r}{k_r}}{\qrfac{cx_r}{k_r}} 
 \frac{\qrfac{b}{\sumk}}{\qrfac{abq^{1-\sumN}/c}{\sumk}}
q^{\suml_{r=1}^{n} rk_r } .
\end{align}

We consider the transformation formula \eqref{an-trans1} with 
$$K(\y)=
 \frac{\qrfac{A, aAy_1\dots y_n/b}{\infty}}{\qrfac{aA/b, Ay_1\dots y_n}{\infty}}
 \smallprod n \frac{\qrfac{aqx_r, bqx_ry_r}{\infty}}{\qrfac{aqx_ry_r, bqx_r}{\infty}}.
 $$
 After some elementary calculations using \cite[Appendix~1]{GR90}, we apply 
 the
 $$x_r\mapsto x_rq^{j_r}, N_r=k_r-j_r, a\mapsto aq^{\sumK}, b\mapsto Aq^{\sumj}
 , \text{ and } c\mapsto bq^{\sumj+1}$$
 case of
 \eqref{an3p2-1}
 to sum the inner-most sum on the right hand side of \eqref{an-trans1}. The resulting identity can be written as \eqref{an-result5a} after some further simplifications.
\end{proof}

\begin{remark}\label{note-convergence}
Before proceeding to the next result, we remark on the applicability of Theorem~\ref{th:an-trans1}. Note that the factors of the form $\qrfac{q^{-k_s}x_r/x_s}{j_r}$ in the middle sum become very large, and will make the series on the right hand side diverge. There are similar factors in the innermost sum which need to be addressed by a judicious choice of $K(\y)$ and $\beta(\j)$.  
However, in \eqref{an-result5a}, this factor does not make the series diverge because of the compensating factor of the form 
$\qrfac{Aq^{-\sumk}}{\sumj}$ in the denominator. Thus, it may be easier to apply Theorem~\ref{th:an-result5a}. This remark applies to many results of this paper. 
\end{remark}

Next, we apply the same idea and sum the inner sum using another $A_n$ balanced $_3\phi_2$ summation of Milne. In this case, calculations analogous to those made in the proof of Theorem~\ref{th:an-result5a} yield a non-terminating series on the right hand side which does not converge. So we obtain a result for terminating series. 

\begin{Theorem}[A terminating extension of \eqref{wang-ma2} over $A_n$]\label{th:an-result5b}
\begin{align}\label{an-result5b}
 &\hspace{-2cm}
 \smallprod n  \frac{\qrfac{Ax_r,  aqx_r}{N_r}}
 {\qrfac{ aAx_r/b, bqx_r}{N_r}} 
 \cr
 \multsum{j}{N}{r} &
\vandermonde{x}{j}{n}  
\sqprod{n}\qrfac{q^{-N_s}\xover{x}}{j_r} \cr
&\smallprod n \frac{(1-bx_rq^{j_r+\sumj})}{(1-bx_r)\qrfac{bx_rq^{1+N_r}}{\sumj}}
q^{\sumj\sumN}\powerq{j}{n} \beta(\j) \cr
=
\multsum{k}{N}{r} &
\vandermonde{x}{k}{n} 
\sqprod{n}\frac{\qrfac{q^{-N_s}\xover{x}}{k_r} }{\qrfac{q\xover{x}}{k_r} }\cr
&\smallprod n \frac{1-ax_rq^{k_r+\sumk}}{1-ax_r} 
\smallprod n \frac{\qrfac{ax_r}{\sumk}}{\qrfac{ax_rq^{1+N_r}}{\sumk}\qrfac{bqx_r, aAx_r/b}{k_r} }  \cr
& \hspace{1cm}\qrfac{a/b, bq/A }{\sumk}
(Aq^{\sumN})^\sumk  q^{\suml_{r=1}^{n} (r-1)k_r-\suml_{r<s}k_r k_s}
  \smallprod n x_r^{k_r} \cr
&\times 
\multsum{j}{k}{r}
\vandermonde{x}{j}{n} 
\sqprod{n}\qrfac{q^{-k_s}\xover{x}}{j_r} \cr
&\hspace{2cm}
\smallprod n \frac{(1-bx_rq^{j_r+\sumj}) \qrfac{ax_rq^{\sumk}, Ax_r}{j_r}} 
 {(1-bx_r)\qrfac{bx_rq^{k_r+1}}{\sumj}}  \frac{1}{\qrfac{bq/A, bq^{1- \sumk} /a}{\sumj}} \cr
&\hspace{2.5cm}
 \Big(\frac{bq}{aA}\Big)^{\sumj} q^{\suml_{r=1}^{n} (r-1)j_r+\suml_{r<s}j_r j_s}   \smallprod n x_r^{-j_r}
 \beta(\j)
.
\end{align}
\end{Theorem}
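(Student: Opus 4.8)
The plan is to follow the strategy used for Theorem~\ref{th:an-result5a}, but to work throughout with the terminating instance $\y=q^{\N}$, i.e.\ to start from \eqref{trans1-terminating} rather than from \eqref{an-trans1}. The reason we cannot pass to free $y_r$ at the end is exactly the phenomenon flagged in Remark~\ref{note-convergence}: the choice of $K(\y)$ needed here, once fed through the summation below, produces a right-hand side that fails to converge for generic $\y$, so the Identity Theorem is unavailable and the statement must be confined to terminating sums.

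First I would specialize $K(\y)$ in Theorem~\ref{th:an-trans1} to a ratio of \emph{finite} products, namely a $K$ with $K(q^{\N})=\smallprod n \qrfac{Ax_r, aqx_r}{N_r}/\qrfac{aAx_r/b, bqx_r}{N_r}$ (written, as in the proof of Theorem~\ref{th:an-result5a}, as a quotient of infinite products so that analyticity is transparent). The point of this choice is that $K(q^{\j+\m})$ contributes precisely the $q$-shifted factorials in $m_r$ and in $\summ$ that are missing from the innermost $m$-sum of \eqref{trans1-terminating}. After the routine reshuffling via \cite[Appendix~1]{GR90} and Milne's product lemmas \cite[Lemmas~3.12 and 4.3]{Milne1997}, the innermost sum should coincide with the $x_r\mapsto x_rq^{j_r}$, $N_r=k_r-j_r$ case of a \emph{second} $A_n$ balanced $_3\phi_2$ summation of Milne---one of the companions to \eqref{an3p2-1} recorded in \cite{Milne1997}, or one of the balanced $_3\phi_2$ summations of \cite{GB1999a}---with its remaining parameters specialized in terms of $a$, $b$, $A$, $q^{\sumk}$ and $q^{\sumj}$. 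Carrying out this summation and then re-expanding the products $\prod_{r,s}$ and $\prod_{r<s}$ through Milne's simplification lemmas yields \eqref{an-result5b}. The extra factors $q^{\sum_{r<s}k_rk_s}$, $q^{\sum_{r<s}j_rj_s}$, $\smallprod n x_r^{k_r}$ and $\smallprod n x_r^{-j_r}$ that distinguish \eqref{an-result5b} from \eqref{an-result5a} are precisely the residue of this re-expansion, and appear because the companion $_3\phi_2$ carries a differently normalized Vandermonde-type product.

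I expect the main obstacle to be in two places. First, one must pin down \emph{which} companion $A_n$ $_3\phi_2$ summation to use and confirm that the balancing condition still holds after the substitution: a slightly different $K(\y)$ routes the computation into a different summation, so some experimentation is needed to land exactly on \eqref{an-result5b}. Second, the bookkeeping of signs, powers of $q$, and monomials in the $x_r$ is delicate, because the individually unbounded factors of the form $\qrfac{q^{-k_s}x_r/x_s}{j_r}$ (cf.\ Remark~\ref{note-convergence}) must be verified to reorganize cleanly into the stated finite products with no leftover terms; this is also the source of the divergence that forces the terminating hypothesis, and checking that it genuinely obstructs analytic continuation (rather than merely complicating it) is part of the argument. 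Once \eqref{an-result5b} is established for $\y=q^{\N}$ there is nothing further to do, since, unlike Theorem~\ref{th:an-result5a}, no claim is made for non-terminating $\y$.
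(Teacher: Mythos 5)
Your proposal is correct and follows essentially the same route as the paper: it works in the terminating case \eqref{trans1-terminating} with exactly the paper's $K(\y)$ (the quotient of infinite products whose specialization is your $K(q^{\N})=\smallprod n \qrfac{Ax_r,aqx_r}{N_r}/\qrfac{aAx_r/b,bqx_r}{N_r}$), and then sums the innermost sum by a companion $A_n$ terminating balanced $_3\phi_2$ after $x_r\mapsto x_rq^{j_r}$, $N_r=k_r-j_r$, noting that the analogous non-terminating series diverges so no analytic continuation is attempted. The only item you leave open is settled by Milne's theorem \cite[Th.~4.18]{Milne1997} (recorded as \eqref{an3p2-2} in the paper), applied with $a\mapsto aq^{\sumk}$, $b\mapsto A$, $c\mapsto bq^{\sumj+1}$, after which the routine simplifications you describe yield \eqref{an-result5b}.
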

\begin{Remark} Theorem~\ref{th:an-result5b} contains an $A_n$ terminating very-well-poised $_6\phi_5$ summation due to Milne~\cite[Th.\ 2.11]{Milne1997}. In  \eqref{an-result5b}, take
$$\beta_{j}=\prod_{r=1}^n \delta_{j_r,0}, b\mapsto a/b, \text{ and  } A\mapsto aq/bc$$ to obtain 
an equivalent form of Milne's result. 
\end{Remark}
\begin{proof} We choose $K(\y)$ in the terminating version of Theorem~\ref{th:an-trans1} so that the inner sum is summable using an $A_n$  terminating balanced $_3 \phi _2$ summation theorem given by Milne~\cite[Th.~4.18]{Milne1997}:
\begin{align}\label{an3p2-2}
&\hspace{-1.5cm}
\frac{  \qrfac{c/a, c/b}{\sumN}}
{ \smallprod n { \qrfac{cx_r , c q^{ \sumN - N_r }/{ab x_r }}{N_r}}}
  \cr
= &
\multsum{m}{N}{r} 
\vandermonde{x}{m}{n} 
\sqprod{n} \frac{\qrfac{q^{-N_s}x_r/x_s}{m_r}}{\qrfac{qx_r/x_s}{m_r}} \cr
& \hspace{1 cm}\times 
\smallprod n \frac{\qrfac{ax_r, bx_r}{m_r}}{\qrfac{cx_r, ab {x_r}q^{1-\sumN}/c}{m_r}} \;
q^{\suml_{r=1}^{n} rm_r } .
\end{align}

We consider the transformation formula \eqref{trans1-terminating} with 
$$K(\y)=
\smallprod n  \frac{\qrfac{Ax_r, aAx_ry_r/b,  aqx_r, bqx_ry_r}{\infty}}
 {\qrfac{ Ax_ry_r, aAx_r/b,,aqx_ry_r, bqx_r}{\infty}} 
 $$
 where $y_r=q^{N_r}$ for $r=1, 2, \dots, n$. 
 After some elementary calculations using \cite[Appendix~1]{GR90}, we apply 
 the
 $$x_r\mapsto x_rq^{j_r}, N_r=k_r-j_r,  a\mapsto aq^{\sumK}, b\mapsto A
 , \text{ and } c\mapsto bq^{\sumj+1}$$
 case of
 \eqref{an3p2-2}
 to sum the inner-most sum obtained from the right hand side of \eqref{trans1-terminating}. The resulting identity can be written as \eqref{an-result5b} after some further simplifications.
\end{proof}

\section{$C_n$ and $D_n$ expansion and transformation formulas}\label{sec:cn-dn}
In this section, we give some variations of the formulas in \S\ref{sec:trans1} where the series involved contain terms that are a mix of  terms usually associated with $A_n$, $C_n$ and $D_n$ series. 

The following is an alternate version of the $C_n$ Bailey transform due to Lilly and Milne. We reuse the symbols from the previous section for this proposition. 
\begin{Proposition}[An alternative expression for the $C_n$ Bailey transform]\label{prop:cn-bailey-inversion}
Let \!
$\F= \big(F_{\k\m}(a)\big)$ and $\G = \big(G_{\k\m}(a)\big)$ be matrices with entries indexed by $(\k , \m)$ defined as 
\begin{subequations}
\begin{align}
F_{\k\m}(a) &:= \label{Cn-Bailey-F1}
\vandermonde{x}{m}{n}  \;
\powerq{m}{n} q^{\sumk\summ} \cr
&\hspace{1cm}
\prod\limits_{1\le r\le s\le n} (1-ax_rx_sq^{m_r+m_s})
\sqprod{n}
\frac{\qrfac{q^{-k_s}\xover{x}}{m_r} }
{\qrfac{ax_r x_sq^{k_s+1}}{m_r}}
\intertext{and}
G_{\k\m}(a) &:= \label{Cn-Bailey-G1}
\triprod{n} \frac{(1-q^{m_r-m_s}\xover x)(1-ax_rx_sq^{m_r+m_s})}
{(1-\xover x)
 }  
 q^{\suml_{r=1}^n r m_r} \cr
&\hspace{1cm}
\sqprod{n}\frac{\qrfac{q^{-k_s}\xover{x}}{m_r} \qrfac{ax_rx_sq^{m_s}}{k_r} 
}
{(1-ax_rx_sq^{m_s}) 
\qrfac{q\xover{x}}{k_r}\qrfac{q\xover{x}}{m_r}
}
\end{align}
\end{subequations}
Then $\F$ and $\G$ are inverses of each other.
\end{Proposition}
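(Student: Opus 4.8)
The plan is to mirror the proof of Proposition~\ref{prop:bailey-inversion}: start from the $C_n$ matrix inversion of Milne and Lilly~\cite{ML1995}, specialize its free parameter, rewrite the entries using a computational lemma of the same flavour as~\eqref{magiclemma2}, and then conjugate by a pair of diagonal matrices chosen to absorb the remaining row- and column-dependent factors.

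First I would write down the two mutually inverse matrices $\M$ and $\M^*$ coming from the $C_n$ Bailey transform in~\cite{ML1995}, taking the specialization of the extra parameter that puts the entries in the shape with a product over $1\le r\le s\le n$ of the factor $(1-ax_rx_sq^{m_r+m_s})$ in the numerator and of $\qrfac{ax_rx_sq^{k_s+1}}{m_r}$ in the denominator; this is the analogue of the $a\mapsto ax_n$ substitution used in the $A_n$ case. The $x_r/x_s$-part of the matrix entries again involves products of the form $\sqprod n \frac{1}{\qrfac{q^{1+m_r-m_s}\xover x }{k_r-m_r}}$, so Milne's computation~\eqref{magiclemma2} (that is, \cite[Lemma~4.3]{Milne1997}) converts these to the Vandermonde-type factor $\vandermonde{x}{m}{n}$ together with $\sqprod n \qrfac{q^{-k_s}\xover{x}}{m_r}/\qrfac{q\xover{x}}{k_r}$ and explicit signs and powers of $q$; the $ax_rx_s$-part is then reorganised using only the elementary $q$-factorial identities of~\cite[Appendix~1]{GR90} together with the helpful simplification \cite[Lemma~3.12]{Milne1997}.

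Next I would define diagonal matrices $\D$ and $\D^\prime$, indexed by multi-indices, whose $\k$-th and $\m$-th diagonal entries are products built from $\smallprod n \qrfac{q\xover{x}}{k_r}$, the $C_n$-type pieces $\prod_{1\le r\le s\le n}\qrfac{ax_rx_sq}{k_r}$, and the corresponding sign and power-of-$q$ factors on the column side --- paralleling exactly the $\D$, $\D^\prime$ of Proposition~\ref{prop:bailey-inversion}, but with $ax_r$ replaced throughout by $ax_rx_s$ and with the index range of the affected products doubled. Setting $\F=\D\M\D^\prime$ and $\G={\D^\prime}^{-1}\M^{*}\D^{-1}$, left multiplication by $\D$ rescales each row of $\M$ by a factor depending only on $\k$ and right multiplication by $\D^\prime$ rescales each column by a factor depending only on $\m$, so $\F$ and $\G$ stay lower triangular and remain inverses of one another because $\M$ and $\M^*$ are. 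A direct comparison of entries then identifies $\F$ and $\G$ with~\eqref{Cn-Bailey-F1} and~\eqref{Cn-Bailey-G1}.

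The \emph{main obstacle} is bookkeeping rather than anything conceptual: one must check that the doubled factors $(1-ax_rx_sq^{m_r+m_s})$ --- including the diagonal terms $r=s$, i.e.\ $1-ax_r^2q^{2m_r}$ --- and the shifted $q$-factorials $\qrfac{ax_rx_sq^{k_s+1}}{m_r}$ distribute correctly among $\M$, $\D$ and $\D^\prime$, and in particular that the $r\leftrightarrow s$ asymmetry in the denominator of~\eqref{Cn-Bailey-F1} against the symmetric product $\triprod n (1-ax_rx_sq^{m_r+m_s})$ appearing in~\eqref{Cn-Bailey-G1} comes out as stated. The lower triangularity, and the non-vanishing of the diagonal entries needed for $\D$ and $\D^\prime$ to be invertible, follow exactly as in the Remark after Proposition~\ref{prop:bailey-inversion} from $\qrfac{q^{-k}}{m}=0$ whenever $m>k$.
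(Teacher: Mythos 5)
Your proposal is correct and follows essentially the same route as the paper: start from Lilly and Milne's $C_n$ inversion (there the redundant parameter is inserted via $x_r\mapsto\sqrt{a}\,x_r$ rather than a shift of $a$ itself), rewrite with \cite[Lemma~4.3]{Milne1997}, and conjugate by diagonal matrices $\D$, $\D^\prime$ so that $\F=\D\M\D^\prime$ and $\G={\D^\prime}^{-1}\M^{*}\D^{-1}$ are still inverses. The only deviation is in the exact form of the diagonal entries (the paper takes $D_{\k\k}=\sqprod n \qrfac{aqx_rx_s}{k_r}\qrfac{q\xover{x}}{k_r}$ and puts the factors $\prod_{1\le r\le s\le n}(1-ax_rx_sq^{m_r+m_s})\sqprod n \qrfac{aqx_rx_s}{m_r+m_s}^{-1}$ with the sign and $q$-power into $D^\prime_{\m\m}$), which is precisely the bookkeeping you flagged.
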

\begin{proof} The proposition is an equivalent formulation of Lilly and Milne~\cite[Th.\ 3.19]{LM1993}. It is obtained by multiplying the matrices in Lilly and Milne's formulation by suitable diagonal matrices. The details are as follows. 

Let $\M$ and $\M^*$ denote the matrices obtained by the $x_r\mapsto \sqrt{a}x_r$ case of equations  (3.16) and (3.18) of \cite{LM1993}. This  
inserts an additional (redundant) parameter $a$, and brings them notationally close to the $A_n$ matrices considered earlier. As before, we rewrite them using \eqref{magiclemma2}.
 Let the entries of the diagonal matrices $\D$ and $\D^\prime$ be given by
\begin{align*}
D_{\k\k} &= \sqprod n \qrfac{a qx_rx_s}{k_r} \qrfac{q\xover{x} }{k_r}\cr
\intertext{and}
D^\prime_{\m\m} &= (-1)^{\summ} q^{\binom{\summ}{2}}
\prod\limits_{1\le r\le s\le n}
\Big(1-ax_r x_sq^{m_r+m_s}\Big)
\sqprod n \frac{1}{\qrfac{aqx_rx_s}{m_r+m_s}}.
\end{align*}
Consider the
matrices
$
\F = \D \M \D^\prime$  and 
$\G = {\D^\prime}^{-1} \M^* \D^{-1} .$
The entries of these matrices are given in \eqref{Cn-Bailey-F1}
and \eqref{Cn-Bailey-G1}, respectively. 
\end{proof}

Recall the expression \eqref{fk} for $f_{\k}(\y;a)$ motivated by the $A_n$ Bailey matrix.  We define an analogous expression for $C_n$ as follows.
\begin{multline}\label{gj}
g_{\j}(\y;a):= 
\triprod n\Big( \frac{1-q^{j_r-j_s}\xover x}{1-\xover x}\Big)
\big(1-ax_rx_sq^{j_r+j_s}\big)   \cr 
\smallprod n (1-ax_r^2q^{2j_r})  
\sqprod{n}\frac{\qrfac{\xover{x}y_s}{j_r} }{\qrfac{aqx_rx_sy_s}{j_r}} (y_1y_2\dots y_n)^\sumj 
 \powerq{j}{n}
%
\end{multline}
Note that  $g_{\k}(q^{\m})$ reduces to the entries  of the $C_n$ Bailey transform in the form $F_{\m\k}(a)$ given in \eqref{Cn-Bailey-F1}. It is analytic in the variables $\yvec$.

\begin{Theorem}[A multiple series extension of \eqref{wang-ma1}]\label{th:an-cntrans1}
Let $K(\y)$ and $\beta(\j)$ be chosen so that the series on both sides are analytic in $y_1, y_2, \dots, y_n$ and converge absolutely for each $y_r$ in a disk  around the origin; or, $y_r=q^{N_r}$, for $r=1, 2, \dots, n$, and so the series terminate.  Then
\begin{align}\label{an-cntrans1}
 K(\y)
\multisum{j}{n} & 
\vandermonde{x}{j}{n} \sqprod{n} \qrfac{\xover{x}y_s}{j_r}  
 \cr
 &
\smallprod n \frac{(1-bx_{r}q^{j_r+\sumj})}{(1-bx_r)\qrfac{bqx_ry_r}{\sumj}}
(y_1y_2\dots y_n)^{\sumj} 
\powerq{j}{n} \beta(\j)
\cr
=
\multisum{k}{n} &
\triprod n \frac{\Big(1-q^{k_r-k_s}x_r/x_s\Big)\Big(1-ax_rx_sq^{k_r+k_s}\Big)}{\big(1-x_r/x_s\big)(\big(1-ax_rx_s)}
\cr
&
\smallprod n \frac{1-ax_{r}^2q^{2k_r}}{1-ax_{r}^2}
\sqprod{n}\frac{\qrfac{\xover{x}y_s, ax_rx_s}{k_r}   }{\qrfac{q\xover{x}, aqx_rx_sy_s}{k_r}}
\cr
&
 \hspace{0.9cm}     (y_1y_2\dots y_n)^\sumk \powerq{k}{n}
 \cr
 &
\times 
\multsum{j}{k}{r}
\vandermonde{x}{j}{n} \smallprod n \frac{\qrfac{bqx_r}{j_r}}{ \qrfac{bx_r}{j_r+\sumj}} \cr
&
 \hspace{2cm}   \sqprod{n} 
\qrfac{ax_rx_sq^{k_s}, q^{-k_s}\xover{x}}{j_r}  
 (-1)^{\sumj}
q^{\suml_{r=1}^{n} rj_r + \binom{\sumj}{2}} \beta(\j)
\cr
&
\times
\underset{r=1, \dots, n}{\sum\limits_{0\leq m_r\leq k_r-j_r}}
\triprod n \frac{\Big(1-q^{j_r-j_s+m_r-m_s}x_r/x_s\Big)\Big(1-ax_rx_sq^{j_r+j_s+m_r+m_s}\Big)}
{\big(1-q^{j_r-j_s}x_r/x_s\big)\big(1-ax_rx_s\big)}
\cr
&
\hspace{3cm}  q^{\suml_{r=1}^n rm_r} K(q^{\j+\m})
\smallprod n \frac{ \qrfac{bx_rq^{1+j_r}}{m_r}}{ \qrfac{bx_rq^{1+j_r+\sumj}}{m_r}}
\cr
&
\hspace{3cm} \sqprod{n}\frac{\qrfac{q^{j_r-k_s}\xover{x}, ax_rx_sq^{k_s+j_r}}{m_r} }{\qrfac{q^{1+j_r-j_s}\xover{x}}{m_r} \qrfac{aqx_rx_s}{j_r+m_r} }
\end{align}
\end{Theorem}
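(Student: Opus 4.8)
The plan is to reproduce the symbolic scheme of Theorem~\ref{th:an-trans1} verbatim, but to swap the outer Bailey machinery from type $A_n$ to type $C_n$ while leaving the inner expansion function of type $A_n$. The decisive structural observation is that the left-hand side of \eqref{an-cntrans1}, after dividing by $\smallprod n (1-bx_r)$, is exactly $K(\y)\multisum{j}{n} f_{\j}(\y;b)\beta(\j)$, where $f_{\j}(\y;b)$ is the $A_n$ function \eqref{fk} in the parameter $b$; whereas the outer sum over $\k$ on the right carries the $C_n$ signature (the $\triprod n$ factors with $ax_rx_s$ and the diagonal $1-ax_r^2q^{2k_r}$), which is precisely the $C_n$ function $g_{\k}(\y;a)$ of \eqref{gj} combined with the $C_n$ Bailey $G$-matrix of \eqref{Cn-Bailey-G1}. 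Thus this is the \emph{mixed} $C_n$--$A_n$ identity: the parameter $a$ is carried by $C_n$-type factors and $b$ by $A_n$-type factors.

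First I would set $A(\y):=K(\y)\multisum{j}{n} f_{\j}(\y;b)\beta(\j)$ and write $A(\y)=\multisum{k}{n} g_{\k}(\y;a)\alpha(\k)$ using the $C_n$ function. Specializing $y_r=q^{N_r}$ and invoking $g_{\k}(q^{\m})=F_{\m\k}(a)$ (the $C_n$ matrix \eqref{Cn-Bailey-F1}), this reads $A(q^{\N})=\multsum{k}{N}{r} F_{\N\k}(a)\alpha(\k)$. Proposition~\ref{prop:cn-bailey-inversion} then gives $\alpha(\k)=\multsum{m}{k}{r} G_{\k\m}(a)A(q^{\m})$ with the $C_n$ matrix \eqref{Cn-Bailey-G1}. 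Substituting the assumed form of $A$ and interchanging the two inner sums by
$$\multsum{m}{k}{r}\;\multsum{j}{m}{r} S(\m,\j)=\multsum{j}{k}{r}\;\underset{r=1, \dots, n}{\sum\limits_{0\leq m_r\leq k_r-j_r}} S(\j+\m,\j),$$
I obtain
$$A(q^{\N})=\multsum{k}{N}{r} F_{\N\k}(a)\multsum{j}{k}{r}\beta(\j)\underset{r=1, \dots, n}{\sum\limits_{0\leq m_r\leq k_r-j_r}} f_{\j}(q^{\j+\m};b)\,G_{\k,\j+\m}(a)\,K(q^{\j+\m}).$$
Dividing through by $\smallprod n (1-bx_r)$ should yield \eqref{an-cntrans1} with $\y=q^{\N}$; the full statement then follows by the Identity Theorem, since both sides are analytic in each $y_r$ in a disk about the origin (using that $g_{\j}$ and $f_{\j}$ are analytic) and agree along $y_r=q^{N_r}$, exactly as in Theorem~\ref{th:an-trans1}.

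The main obstacle is the simplification of the middle and innermost factors, which now mixes two root-system types. The $C_n$ $G$-matrix $G_{\k,\j+\m}(a)$ of \eqref{Cn-Bailey-G1} contributes the $\triprod n$ products built from $1-q^{\cdots}\xover x$ and $1-ax_rx_sq^{\cdots}$ together with the $\qrfac{q^{-k_s}\xover x}{m_r}$ and $\qrfac{ax_rx_sq^{m_s}}{k_r}$ factors, all evaluated at $\m\mapsto\j+\m$; each of these must be split into a $\j$-part that is absorbed into the middle sum and an $\m$-part that remains in the innermost sum. The required factorizations are the $C_n$ analogues of \eqref{elem1}, obtained from the elementary $\qrfac{\cdot}{\cdot}$ identities of \cite[Appendix~1]{GR90} together with the $C_n$ Vandermonde bookkeeping of \eqref{magiclemma2} and \cite[Lemma~3.12]{Milne1997}. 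Meanwhile the $A_n$-type factor $f_{\j}(q^{\j+\m};b)=F_{\j+\m,\j}(b)$ supplies the single-variable $b$-factors $\qrfac{bqx_r}{j_r}/\qrfac{bx_r}{j_r+\sumj}$ and $\qrfac{bx_rq^{1+j_r}}{m_r}/\qrfac{bx_rq^{1+j_r+\sumj}}{m_r}$, which involve no $C_n$ products and so carry through essentially unchanged. The delicate point is the bookkeeping of the collected powers of $q$ and the signs $(-1)^{\sumj}$; once the $C_n$ factors are correctly apportioned among the three sums, the remaining reductions are the same routine manipulations used in the proof of Theorem~\ref{th:an-trans1}.
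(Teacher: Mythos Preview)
Your proposal is correct and follows essentially the same route as the paper: you use the $C_n$ Bailey inversion (Proposition~\ref{prop:cn-bailey-inversion}) for the outer expansion with $g_{\k}(\y;a)$ while keeping the $A_n$ function $f_{\j}(\y;b)$ on the inside, interchange the inner sums, divide by $\smallprod n (1-bx_r)$, and finish by analytic continuation exactly as in Theorem~\ref{th:an-trans1}. The paper notes that \eqref{elem1} itself (not merely an analogue) is again useful for the simplifications, since it concerns only the $\xover{x}$-type factors.
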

\begin{Remark} The sum on the left hand side has terms associated with very-well-poised $A_n$ series. However, the 
outermost sum on the right hand side has terms associated with a very-well-poised $C_n$ series. 
\end{Remark}
\begin{proof}
The proof is analogous to that of Theorem~\ref{th:an-trans1}. 
Let $f_{\k}(\y;a)$ be given by \eqref{fk} and $g_{\j}(\y;a)$ by \eqref{gj}.
Referring to the exposition in \S\ref{sec:intro}, $g_{\k}(\y,a)$ is analogous to $f_k(y,a)$ in \eqref{Ay} and $f_{\k}(\y,b)$ is analogous to $h_k(y,b)$ in \eqref{Ayb}.

We begin with an expression of the form
\begin{equation*}
C(\y):= K(y)\multisum{j}{n} f_{\j}(\y;b) \beta(\j)
\end{equation*}
containing a parameter $b$, and try to write it 
in the form
$$C(\y)=\multisum{k}{n} g_{\k}(\y;a) \alpha(\k)$$
with parameter $a$. 
As before, we solve for $\alpha(\k)$ in terms of $C(q^{m})$, using Proposition~\ref{prop:cn-bailey-inversion}, and follow the 
steps in the proof of Theorem~\ref{th:an-trans1}. The algebraic simplifications are also similar. 
In particular, \eqref{elem1} is useful again. 
\end{proof}

Next we specialize $K(\y)$ in such a manner that the inner sum can be summed by using 
a $D_n$ $_3\phi_2$ summation. 

\begin{Theorem}[A multiple series extension of \eqref{wang-ma2}]\label{th:an-cntrans2}
Let $a$, $A$, $b$, $x_1,x_2,\dots, x_n$ and $\beta(\j)$ be such that the 
series on both sides are analytic in $y_1, y_2, \dots, y_n$ and converge absolutely for each $y_r$ in a disk  around the origin; or, $y_r=q^{N_r}$, for $r=1, 2, \dots, n$, and so the series terminate. 
\begin{align}\label{an-cntrans2}
\frac{ \qrfac{A}{\infty}}{ \qrfac{Ay_1y_2 \dots y_n}{\infty}} 
&
\smallprod n  \frac{  \qrfac{aqx_r^2,  bqx_ry_r, aAx_ry_r/b}{\infty}}
 { \qrfac{bqx_r,aA x_r/b}{\infty}}
 \cr
 &
\triprod n \qrfac{aqx_rx_s, aqx_rx_sy_ry_s }{\infty}  \sqprod{n} \frac{1}{\qrfac{aqx_rx_sy_s}{\infty}}
\cr
&
\multisum{j}{n} 
\vandermonde{x}{j}{n} \sqprod{n} \qrfac{\xover{x}y_s}{j_r}  
 \cr
 &
\hspace{1cm}   \smallprod n \frac{(1-bx_{r}q^{j_r+\sumj})}{(1-bx_r)\qrfac{bqx_ry_r}{\sumj}}
(y_1y_2\dots y_n)^{\sumj} 
\powerq{j}{n} \beta(\j)
\cr
&
\hspace{-0.5cm}
=
\multisum{k}{n} 
\triprod n \frac{\Big(1-q^{k_r-k_s}x_r/x_s\Big)\Big(1-ax_rx_sq^{k_r+k_s}\Big)}
{\Big(1-x_r/x_s\Big)\Big(1-ax_rx_s\Big)}
\cr
&
\smallprod n  \frac{(1-ax_{r}^2q^{2k_r}) \qrfac{ ax_r/b, bqx_r/A}{k_r}}
{(1-ax_r^2) \qrfac{bqx_r, aAx_r/b}{k_r}}
\sqprod{n}\frac{\qrfac{\xover{x}y_s, ax_rx_s }{k_r}  }{\qrfac{q\xover{x},aqx_rx_sy_s}{k_r}  }
\cr
&
 \hspace{0.9cm}     (Ay_1y_2\dots y_n)^\sumk \powerq{k}{n}
 \cr
 &
\times 
\multsum{j}{k}{r}
\triprod n \frac{\big(1-q^{j_r-j_s}x_r/x_s\big)}
{\big(1-x_r/x_s\big)\qrfac{ax_rx_s}{j_r+j_s}}
  \cr
&
 \hspace{2cm}   \sqprod{n} \qrfac{ax_rx_sq^{k_s}, q^{-k_s}\xover{x}}{j_r} 
\; \qrfac{A}{\sumj}  
 \cr
&
 \hspace{2cm}  
\smallprod n \frac{(1-bx_rq^{j_r+\sumj}) \qrfac{bq/ax_r}{\sumj-j_r}}
 {(1-bx_r) \qrfac{bq^{1-k_r}/ax_r, bx_rq^{1+k_r}}{\sumj}\qrfac{bqx_r/A}{j_r}}
\cr
&
 \hspace{2.5cm}   \Bigg( \frac{bq}{ aA} \Bigg)^{\sumj}
q^{\suml_{r=1}^{n} (r-1)j_r + \sum_{r<s} j_rj_s}  \smallprod n x_r^{-j_r} \beta(\j)
\end{align}
\end{Theorem}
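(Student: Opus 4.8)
The plan is to follow the same template used in the proof of Theorem~\ref{th:an-result5a}, but now starting from the $C_n$/$D_n$-flavored expansion in Theorem~\ref{th:an-cntrans1} rather than the pure-$A_n$ one. First I would specialize $K(\y)$ in \eqref{an-cntrans1} to the infinite product
$$K(\y)=\frac{\qrfac{A}{\infty}}{\qrfac{Ay_1\cdots y_n}{\infty}}
\smallprod n \frac{\qrfac{aqx_r^2, bqx_ry_r, aAx_ry_r/b}{\infty}}{\qrfac{bqx_r, aAx_r/b}{\infty}}
\triprod n \qrfac{aqx_rx_s, aqx_rx_sy_ry_s}{\infty}
\sqprod n \frac{1}{\qrfac{aqx_rx_sy_s}{\infty}},$$
which is the product appearing on the left of \eqref{an-cntrans2}. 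Using the $q$-binomial theorems \cite[(1.3.15)--(1.3.16)]{GR90} as in Remark~3 after Theorem~\ref{th:an-result5a}, each factor of the form $\qrfac{\cdot}{\infty}/\qrfac{\cdot y}{\infty}$ is analytic in the relevant $y_r$ near the origin, so the analyticity hypothesis of Theorem~\ref{th:an-cntrans1} holds for this $K$ (under the stated convergence conditions). Evaluating $K(q^{\j+\m})$ and pulling out the $\j$-dependent part, the innermost $\m$-sum in \eqref{an-cntrans1} should collapse, after routine use of \cite[Appendix~1]{GR90}, to a terminating, balanced $D_n$ (i.e.\ $C_n$-type with the extra numerator product) ${}_3\phi_2$ of the shape that Milne or Milne--Lilly sum in closed form; the paper's pattern of ``$C_n$ Bailey matrix on the outside, $D_n$ ${}_3\phi_2$ for the inner sum'' (cf.\ the transition from Theorem~\ref{th:an-cntrans1} to this one) tells us exactly which summation theorem to invoke.

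Concretely, I would apply the $D_n$ ${}_3\phi_2$ summation with the substitution $x_r\mapsto x_rq^{j_r}$, $N_r=k_r-j_r$, $a\mapsto aq^{\sumk}$, $b\mapsto Aq^{\sumj}$, $c\mapsto bq^{\sumj+1}$ (the natural $C_n/D_n$ analogue of the substitution used in the proof of Theorem~\ref{th:an-result5a}), so that the $\qrfac{ax_rx_sq^{k_s}}{j_r}$ and $\qrfac{q^{j_r-k_s}x_r/x_s}{m_r}$ factors in the $\m$-sum of \eqref{an-cntrans1} match the numerator parameters of the ${}_3\phi_2$ and the $K(q^{\j+\m})$ factor supplies the balancing denominators. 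The sum evaluates to a ratio of $q$-rising factorials; substituting this back into \eqref{an-cntrans1}, the product $K(q^{\j+\m})$ together with the evaluated sum recombines with the remaining $\k$- and $\j$-factors, and after cancellation the $\qrfac{\cdot}{\infty}$ pieces reassemble into the product on the left of \eqref{an-cntrans2} while the finite factors rearrange into the right side. For the non-terminating case one then extends from $\y=q^{\N}$ to a neighborhood of the origin by the Identity Theorem, exactly as in Theorem~\ref{th:an-trans1}, using the analyticity of both sides under the multiple power series test.

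The main obstacle I expect is purely computational rather than conceptual: identifying \emph{which} $D_n$ ${}_3\phi_2$ summation (and in precisely which parametrization) makes the inner $\m$-sum close up, and then verifying that after the substitution the ``balanced'' condition of that summation is genuinely satisfied by the chosen $K(\y)$. The mix of type-$A$ factors $\qrfac{x_r/x_s \cdot}{\cdot}$ with type-$C$ factors $(1-ax_rx_sq^{\cdots})$ and the many shifts $x_r\mapsto x_rq^{j_r}$ make the bookkeeping of powers of $q$, signs, and the $\smallprod n x_r^{\pm j_r}$ monomials delicate; the identities \eqref{magiclemma2}, \eqref{elem1}, \cite[Lemma~3.12]{Milne1997}, and \cite[Lemma~4.3]{Milne1997} are the tools that keep this manageable, and a secondary obstacle is simply confirming that all the ``large'' factors $\qrfac{q^{-k_s}x_r/x_s}{j_r}$ are cancelled by compensating denominators (the phenomenon flagged in Remark~\ref{note-convergence}), so that the stated convergence conditions on $|y_1\cdots y_n|$, $|Ay_1\cdots y_n|$ and $|bq/aA|$ actually suffice. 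Once the correct inner summation is pinned down, the rest is a (lengthy) sequence of simplifications of the kind already suppressed in the proofs of Theorems~\ref{th:an-result5a} and \ref{th:an-result5b}.
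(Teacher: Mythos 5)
Your proposal follows the paper's own route exactly: specialize $K(\y)$ in Theorem~\ref{th:an-cntrans1} to the product on the left of \eqref{an-cntrans2}, sum the inner $\m$-sum by the Milne--Lilly $D_n$ balanced $_3\phi_2$ \eqref{dn3p2-1}, and extend from $\y=q^{\N}$ by analytic continuation. The one correction is the parametrization you guessed by analogy with \eqref{an3p2-1}: the case of \eqref{dn3p2-1} that closes the inner sum is $x_r\mapsto x_rq^{j_r}$, $N_r=k_r-j_r$, $a\mapsto a$, $b\mapsto Aq^{\sumj}$, $c\mapsto aA/b$, not $a\mapsto aq^{\sumk}$, $c\mapsto bq^{\sumj+1}$. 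Here $a$ needs no shift because in \eqref{an-cntrans1} it is attached to $x_rx_s$ and the $k$-dependence enters through the factors $\qrfac{ax_rx_sq^{k_s+j_r}}{m_r}$, i.e.\ through $q^{k_s}$ on $x_s$ rather than through $a$; and the balancing denominator parameter is supplied by the $\qrfac{aAx_ry_r/b}{\infty}/\qrfac{aAx_r/b}{\infty}$ factors of $K(\y)$, which is why $c\mapsto aA/b$. With that substitution fixed, the rest of your outline (bookkeeping via \eqref{magiclemma2}, \eqref{elem1}, and \cite[Lemma~3.12]{Milne1997}, then the Identity Theorem) is exactly what the paper does.
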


\begin{proof} We choose $K(\y)$ in Theorem~\ref{th:an-cntrans1} so that the inner sum is summable using a $D_n$ balanced $_3\phi_2$ summation formula due to Milne and Lilly~\cite{ML1995} (see
\cite[Eq.\ 3.18]{GB1999a}):
\begin{align}\label{dn3p2-1}
\smallprod n \frac{\qrfac{cx_r/b}{N_r} \qrfac{aqx_r/c}{N_r} }{\qrfac{cx_r}{N_r}  \qrfac{abqx_r/c}{N_r}} & b^{\sumN} \cr
&  \hspace{-0.5cm}   =
\multsum{k}{N}{r} 
\vandermonde{x}{k}{n} 
\sqprod{n} \frac{\qrfac{q^{-N_s}x_r/x_s}{k_r}}{\qrfac{qx_r/x_s}{k_r}} \cr
&\hspace{1 cm}\times 
\triprod n \frac{1}{ \qrfac{ax_rx_s}{k_r+k_s}} 
  \sqprod{n} \qrfac{ax_rx_sq^{N_s}}{k_r} 
  \cr
  &
\hspace{1.5 cm} \times \smallprod n \frac{1}{\qrfac{cx_r, abqx_r/c}{k_r}} 
\qrfac{b}{\sumk}
q^{\suml_{r=1}^{n} rk_r } .
\end{align}

We consider \eqref{an-cntrans1} with 
\begin{align*}
K(\y)= \frac{ \qrfac{A}{\infty}}{ \qrfac{Ay_1y_2 \dots y_n}{\infty}} 
&
\smallprod n  \frac{  \qrfac{aqx_r^2,  bqx_ry_r, aAx_ry_r/b}{\infty}}
 { \qrfac{bqx_r,aA x_r/b}{\infty}}
 \cr
 &
\triprod n \qrfac{aqx_rx_s, aqx_rx_sy_ry_s }{\infty}  \sqprod{n} \frac{1}{\qrfac{aqx_rx_sy_s}{\infty}}.
\end{align*}
 
 After some elementary calculations using \cite[Appendix~1]{GR90}, we apply 
 the
 $$x_r\mapsto x_rq^{j_r}, N_r=k_r-j_r, a\mapsto a, b\mapsto Aq^{\sumj}
 , \text{ and } c\mapsto aA/b$$
 case of
 \eqref{dn3p2-1}
 to sum the inner-most sum on the right hand side of \eqref{an-cntrans1}. The resulting identity can be written as 
 \eqref{an-cntrans2}  after some further simplifications.
\end{proof}

Theorem~\ref{th:an-cntrans2} contains a $C_n$ non-terminating $_6\phi_5$ summation due to Gustafson~\cite{RG1989},  
see Lilly and Milne~\cite[Th.\  2.18]{LM1993}. It can be re-written to fit the notation used for $A_n$ series as follows.
 Let
$|{aq}/{bc_1c_2\dots c_n d}|<1$. Then
\begin{align}\label{cn-nt6p5}
\frac{\qrfac{ aq/bd}{\infty}}{\qrfac{aq/bc_1\dots c_n d}{\infty}}
  &
\smallprod n  \frac{\qrfac{aqx_r^2, aqx_r/bc_r,   aqx_r/c_r d }{\infty}}
 {\qrfac{ aqx_r/b, aqx_r/d}{\infty}} \cr
 &\hspace{-1cm}\times
 \triprod{n} \qrfac{aqx_rx_s, aqx_rx_s/c_rc_s}{\infty}
 \sqprod{n}\frac{1}{\qrfac{aqx_rx_s/c_s}{\infty}}\cr
=
\multisum{k}{n} &
\triprod{n} \frac{\big(1-q^{k_r-k_s}\xover x\big)\big(1-aq^{k_r+k_s}x_rx_s\big)}
{\big(1-\xover x\big)\big(1-ax_rx_s\big)}
\smallprod n \frac{1-aq^{2k_r}x_r^2}{1-ax_r^2}
\cr
& \times
\sqprod{n}\frac{\qrfac{ax_rx_s, c_s\xover{x}}{k_r} }{\qrfac{q\xover{x}, aqx_rx_s/c_s}{k_r}
 }
 \smallprod n \frac{\qrfac{bx_r,dx_r}{k_r}}
{\qrfac{aqx_r/b, aqx_r/d}{k_r}}
  \cr
& \hspace{1cm} \times
\Bigg(\frac{aq}{bc_1\dots c_n d}\Bigg)^{\sumk} q^{\suml_{r=1}^{n} (r-1)k_r}
.
\end{align}
To obtain \eqref{cn-nt6p5}, take
$$\beta_{j}=\prod_{r=1}^n \delta_{j_r,0}, b\mapsto a/b, y_r\mapsto 1/c_r, \text{ and  } A\mapsto aq/bd$$
in Theorem~\ref{th:an-cntrans2}. 

Theorem~\ref{th:an-cntrans2} is an expansion formula where the left hand side is an $A_n$ series, and the right hand side has a mix of $C_n$ and $D_n$ type factors.  The second expansion theorem of this section transforms a $C_n$ type series into a series which mixes elements of an $A_n$ series with a $D_n$ series. 
\begin{Theorem}[A multiple series extension of \eqref{wang-ma1}]\label{th:cn-antrans-3}
Let $K(\y)$ and $\beta(\j)$ be chosen so that the series on both sides are analytic in $y_1, y_2, \dots, y_n$ and converge absolutely for each $y_r$ in a disk  around the origin; or, $y_r=q^{N_r}$, for $r=1, 2, \dots, n$, and so the series terminate.  Then
\begin{align}\label{cn-antrans-3}
K(\y)
\multisum{j}{n} &
\triprod n \frac{\big(1-q^{j_r-j_s}\xover x\big)\big(1-bx_rx_sq^{j_r+j_s}\big) } 
{\big(1-\xover x\big)\big(1-bx_rx_s\big)}
\cr
& 
\smallprod n \frac{1-bx_r^2q^{2j_r}}{1-bx_r^2}
\sqprod{n}\frac{\qrfac{\xover{x}y_s}{j_r} }{\qrfac{bqx_rx_sy_s}{j_r}} (y_1y_2\dots y_n)^\sumj 
 \powerq{j}{n}
\beta(\j) \cr
=
\multisum{k}{n} &
\vandermonde{x}{k}{n} 
\sqprod{n}\frac{\qrfac{\xover{x}y_s}{k_r} }{\qrfac{q\xover{x}}{k_r} }\cr
&
\smallprod n \frac{(1-ax_rq^{k_r+\sumk})\qrfac{ax_r}{\sumk}}{(1-ax_r)\qrfac{aqx_ry_r}{\sumk}}
(y_1y_2\dots y_n)^\sumk \powerq{k}{n} \cr
&\times 
\multsum{j}{k}{r}
\triprod n 
\frac{(1-q^{j_r-j_s}\xover x)\big( 1- b x_rx_s q^{j_r+j_s}\big)}
{ (1-\xover x)\big( 1- b x_rx_s \big) 
} 
\cr
&\hspace{1.25cm}
\smallprod n \frac{\big(1-bx_r^2q^{2j_r}\big)\qrfac{ax_rq^{\sumk}}{j_r}}{\big(1-bx_r^2\big)}
 \cr
&\hspace{1.75cm}
\sqprod{n}\frac{ \qrfac{q^{-k_s}\xover{x}, bqx_rx_s}{j_r}}{\qrfac{bqx_rx_s}{j_r+j_s}}
(-1)^{\sumj} q^{\suml_{r=1}^{n} rj_r + \binom{\sumj}{2}} \beta(\j)\cr
&\times
\underset{r=1, \dots, n}{\sum\limits_{0\leq m_r\leq k_r-j_r}}
\triprod n \frac{1-q^{j_r-j_s+m_r-m_s} \xover {x} }{1-q^{j_r-j_s}  \xover{x}}
\cr
&\hspace{2.25cm}
\sqprod{n}\frac{\qrfac{q^{j_r-k_s}\xover{x}, bx_rx_sq^{1+j_r}}{m_r} 
}{\qrfac{q^{1+j_r-j_s}\xover{x}, bx_rx_sq^{1+j_r+j_s}}{m_r} }
\cr
&\hspace{2.5cm}
\smallprod n \frac{\qrfac{ax_rq^{j_r+\sumk}}{m_r}}
{\qrfac{aqx_r}{j_r+m_r}}
\;
q^{\suml_{r=1}^n rm_r} K(q^{\j+\m})
.
\end{align}
\end{Theorem}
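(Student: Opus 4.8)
The plan is to run the matrix-inversion scheme of \S\ref{sec:intro} exactly as in the proof of Theorem~\ref{th:an-cntrans1}, but with the roles of the $A_n$ and $C_n$ Bailey transforms interchanged. In the template \eqref{Ay}--\eqref{Ayb} we take the $A_n$ function $f_{\k}(\y;a)$ of \eqref{fk} in the role of the outer kernel $f_k(y;a)$ and the $C_n$ function $g_{\j}(\y;b)$ of \eqref{gj} in the role of the inner kernel $h_j(y)$. Being built from $g_{\j}(\y;b)$, the left-hand side of \eqref{cn-antrans-3} is then a very-well-poised $C_n$-type series; the outer sum on the right, produced by the $A_n$ Bailey matrix $\F$ of Proposition~\ref{prop:bailey-inversion} and its inverse $\G$, carries only $A_n$-type $a$-factors; and the middle and innermost sums inherit the $C_n$ $b$-factors from $g$ together with the $x_r/x_s$-factors from $\G$ --- which is exactly the mixed structure displayed in \eqref{cn-antrans-3}.

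In detail: set $C(\y):=K(\y)\sum_{\j} g_{\j}(\y;b)\,\beta(\j)$, which is analytic in each $y_r$ near $0$ by hypothesis. Since $f_{\k}(q^{\m};a)$ reduces to the $A_n$ Bailey entry $F_{\m\k}(a)$ of \eqref{An-Bailey-F}, writing $C(\y)=\sum_{\k} f_{\k}(\y;a)\,\alpha(\k)$ and specialising $\y=q^{\N}$ gives $C(q^{\N})=\sum_{\k} F_{\N\k}(a)\,\alpha(\k)$, a finite sum because $F_{\N\k}(a)=0$ unless $k_r\le N_r$ for all $r$; the inversion of Proposition~\ref{prop:bailey-inversion} then yields $\alpha(\k)=\sum_{\m} G_{\k\m}(a)\,C(q^{\m})$. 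Substituting $C(q^{\m})=K(q^{\m})\sum_{\j} g_{\j}(q^{\m};b)\,\beta(\j)$, in which $g_{\j}(q^{\m};b)$ agrees with the $C_n$ Bailey entry $F_{\m\j}(b)$ of \eqref{Cn-Bailey-F1}, and interchanging the two inner summations exactly as in the proof of Theorem~\ref{th:an-trans1} (the substitution $\m\mapsto\j+\m$), one obtains a triple sum with composite weight $F_{\N\k}(a)\,G_{\k,\j+\m}(a)\,F_{\j+\m,\j}(b)\,K(q^{\j+\m})$. Splitting this weight into an outer ($\k$-only) factor, a middle ($\j$-dependent) factor and an inner ($\m$-dependent) factor, and then dividing both sides by $\triprod n (1-bx_rx_s)\,\smallprod n (1-bx_r^2)$ to match the normalisation on the left of \eqref{cn-antrans-3}, produces \eqref{cn-antrans-3} with $y_r=q^{N_r}$, i.e.\ the terminating case. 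The non-terminating statement follows by the Identity Theorem, since both sides are assumed analytic in each $y_r$ on a disk about $0$ and agree on $y_r=q^{N_r}$ for $N_r=0,1,2,\dots$, a set accumulating at $0$ because $|q|<1$.

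For the algebraic splitting I expect to reuse, almost verbatim, the identity \eqref{elem1}, the rearrangement \eqref{magiclemma2}, Milne's \cite[Lemma~3.12]{Milne1997} (quoted in the proof of Theorem~\ref{th:an-trans1}), and the elementary $q$-shifted-factorial manipulations of \cite[Appendix~1]{GR90}; these already handle all the $A_n$-type factors $\qrfac{q^{-k_s}x_r/x_s}{\cdot}$ and the attendant powers of $q$ and monomials in the $x_r$. The one ingredient not already present in the proof of Theorem~\ref{th:an-cntrans1} is the parallel treatment of the $C_n$-type $b$-factors: because the inner kernel is now the $C_n$ function $g_{\j}(\y;b)$ rather than the $A_n$ function $f_{\j}(\y;b)$, the factor $F_{\j+\m,\j}(b)$ contributes shifted products $\qrfac{bx_rx_sq^{1+j_r}}{m_r}$ and $\qrfac{bx_rx_sq^{1+j_r+j_s}}{m_r}$ that must be split off from $\qrfac{bqx_rx_s}{j_r+j_s}$-type factors, which requires an identity parallel to \eqref{elem1} with $q^{-k_s}x_r/x_s$ replaced by $bqx_rx_s$. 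This, together with the routine bookkeeping of the signs $(-1)^{\sumj}$, the quadratic exponents $\binom{\sumj}{2}$ and $\sum_{r<s}j_rj_s$, the competing linear exponents $\sum_r(r-1)j_r$ and $\sum_r rj_r$, and the monomials $\prod_r x_r^{\pm j_r}$ as they travel through the Vandermonde rearrangements, is where essentially all of the work lies and is the main obstacle. As in the earlier proofs the convergence and analyticity hypotheses are taken as given, with the multiple power series criterion recalled in Remark~\ref{note-convergence} being the relevant check in applications.
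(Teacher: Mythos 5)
Your proposal follows the paper's own route exactly: start from $D(\y)=K(\y)\sum_{\j}g_{\j}(\y;b)\beta(\j)$ with the $C_n$ kernel \eqref{gj} on the inside, expand in the $A_n$ kernel $f_{\k}(\y;a)$ of \eqref{fk}, invert with the $A_n$ Bailey matrices of Proposition~\ref{prop:bailey-inversion} at $\y=q^{\N}$, interchange the inner sums, normalise by the $b$-factors, and finish by analytic continuation, with the same supporting identities (\eqref{elem1}, \eqref{magiclemma2}, \cite[Lemma~3.12]{Milne1997}) doing the bookkeeping. The proposal is correct and essentially identical in approach to the proof in the paper, and even spells out some routine steps the paper leaves implicit.
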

\begin{Remark} The sum on the left hand side has terms associated with very-well-poised $C_n$ series. However, the 
outermost sum on the right hand side has terms associated with a very-well-poised $A_n$ series; the middle sums has elements of the $C_n$ series too. 
\end{Remark}
\begin{proof}
The proof is analogous to that of Theorem~\ref{th:an-trans1}.
Let $f_{\k}(\y;a)$ be given by \eqref{fk}. 
We begin with an expression of the form
\begin{equation*}
D(\y):= K(y)\multisum{j}{n} g_{\j}(\y;b) 
 \beta(\j)
\end{equation*}
containing a parameter $b$, and try to write it 
in the form
$$D(\y)=\multisum{k}{n} f_{\k}(\y;a) \alpha(\k)$$
with parameter $a$. 
As before, we can solve for $\alpha(\k)$ in terms of $D(q^{m})$, and follow the 
steps in the proof of Theorem~\ref{th:an-trans1}. We use the $A_n$ Bailey matrix inversion in Proposition~\ref{prop:bailey-inversion}.
\end{proof}


In the next result, we specialize $K(\y)$ in such a manner that the inner sum can be summed by using 
the $D_n$ $_3\phi_2$ summation due to the first author. 
We obtain a result for terminating series.
\begin{Theorem}[A terminating extension of \eqref{wang-ma2} over root  systems]\label{th:dn-result5}
\begin{align}\label{dn-result5}
\frac{1}{\qrfac{ aA/b}{\sumN}} &
\smallprod n  \qrfac{Ax_r,   aqx_r}{N_r}
 \triprod{n}\qrfac{bqx_rx_s}{N_r+N_s}
 \sqprod{n}\frac{1}{\qrfac{bqx_rx_s}{N_r}}\cr
\times
\multsum{j}{N}{r} &
\triprod n \frac{\big(1-q^{j_r-j_s}\xover x\big)\big(1-bx_rx_sq^{j_r+j_s}\big) }
{\big(1-\xover x\big)\big(1-bx_rx_s\big)}
\cr
&
\smallprod n \frac{1-bx_r^2q^{2j_r}}{1-bx_r^2}
\sqprod{n}\frac{\qrfac{q^{-N_s}\xover{x}}{j_r} }{\qrfac{bx_rx_sq^{1+N_s}}{j_r}} q^{\sumj\sumN+\suml_{r=1}^{n} (r-1)j_r} 
\beta(\j) \cr
=
\multsum{k}{N}{r} &
\triprod n \frac{\big(1-q^{k_r-k_s}\xover x\big)\qrfac{bqx_rx_s}{k_r+k_s}}
{\big(1-\xover x\big)}
\sqprod{n}\frac{\qrfac{q^{-N_s}\xover{x}}{k_r} }{\qrfac{q\xover{x}}{k_r}
\qrfac{bqx_rx_s}{k_r} }
\cr
&\smallprod n \frac{(1-ax_rq^{k_r+\sumk})\qrfac{ax_r}{\sumk}\qrfac{aq^{\sumk-k_r}/bx_r, bqx_r/A}{k_r}}
{(1-ax_r) \qrfac{ax_rq^{1+N_r}}{\sumk}}
  \cr
& 
\frac{1}{\qrfac{aA/b}{\sumk}}
\big(Aq^{\sumN}\big)^{\sumk}q^{\suml_{r=1}^{n} (r-1)k_r-\sum\limits_{r<s} k_rk_s} \smallprod n x_r^{k_r}
\cr
&\times 
\multsum{j}{k}{r}
\triprod n \frac{\big(1-q^{j_r-j_s}\xover x\big)\big(1-bx_rx_sq^{j_r+j_s}\big)}
{\big(1-\xover x\big)\big(1-bx_rx_s\big)}
 \cr
&\hspace{1.5cm}
\smallprod n \frac{\big(1-bx_r^2q^{2j_r}\big)\qrfac{ax_rq^{\sumk}, Ax_r}{j_r}}
{(1-bx_r^2)\qrfac{bqx_r/A, bx_rq^{1-\sumk}/a}{j_r}}
 \cr
&\hspace{2cm}
\sqprod{n}
\frac{\qrfac{q^{-k_s}\xover{x}}{j_r}} {\qrfac{bx_rx_sq^{1+k_s}}{j_r}}
\Big(\frac{bq}{aA}\Big)^{\sumj} q^{\suml_{r=1}^{n} (r-1)j_r} \beta(\j)
.
\end{align}
\end{Theorem}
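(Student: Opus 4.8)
The plan is to follow the route of the proof of Theorem~\ref{th:an-result5b}, with Theorem~\ref{th:cn-antrans-3} playing the role of Theorem~\ref{th:an-trans1} and the terminating $D_n$ balanced $_3\phi_2$ summation of the first author~\cite{GB1999a} (a $D_n$ analogue of \eqref{an3p2-2}) playing the role of the $A_n$ summation used there. First I would take the terminating case $\y=q^{\N}$ of Theorem~\ref{th:cn-antrans-3} and specialize $K(\y)$ to the product of infinite $q$-shifted factorials
$$K(\y)= \frac{\qrfac{(aA/b)\,y_1y_2\cdots y_n}{\infty}}{\qrfac{aA/b}{\infty}}\smallprod n \frac{\qrfac{Ax_r,\, aqx_r}{\infty}}{\qrfac{Ax_ry_r,\, aqx_ry_r}{\infty}}\triprod n \frac{\qrfac{bqx_rx_s}{\infty}}{\qrfac{bqx_rx_sy_ry_s}{\infty}}\sqprod n \frac{\qrfac{bqx_rx_sy_s}{\infty}}{\qrfac{bqx_rx_s}{\infty}},$$
which is the natural $C_n/D_n$ analogue of the $K(\y)$ used in the proof of Theorem~\ref{th:an-result5b}; it reproduces the prefactor of \eqref{dn-result5} at $\y=q^{\N}$, and each of its factors is analytic in every $y_r$ near the origin by the $q$-binomial theorems \cite[(1.3.15),(1.3.16)]{GR90}. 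After the substitution $x_r\mapsto x_rq^{j_r}$, $N_r\mapsto k_r-j_r$ together with suitable images of the remaining parameters (in the spirit of the proof of Theorem~\ref{th:an-cntrans2}: $b$ becoming $A$ up to a $q^{\sumj}$-shift, $c$ becoming $aA/b$), the innermost $\m$-sum on the right of the terminating form of \eqref{cn-antrans-3} should be exactly an instance of the $D_n$ $_3\phi_2$ of \cite{GB1999a} and hence summable in closed form. As flagged just before the statement, running the analogue of the Theorem~\ref{th:an-cntrans2} computation here yields a non-terminating right-hand side that fails to converge, so I would keep $\y=q^{\N}$ throughout --- which is why \eqref{dn-result5} carries no analytic-continuation hypothesis.

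After summing the innermost sum, the rest is a long but routine simplification: one combines the $q$-shifted factorials and $q$-powers produced by the $D_n$ $_3\phi_2$ with those already present, using the elementary identities of \cite[Appendix~1]{GR90}, the rearrangement \eqref{elem1}, and Milne's computations \eqref{magiclemma2} and \cite[Lemma~3.12]{Milne1997}. This is where the $D_n$-Vandermonde factors $\triprod n \qrfac{bqx_rx_s}{N_r+N_s}$ and $\triprod n \qrfac{bqx_rx_s}{k_r+k_s}$, the global factor $1/\qrfac{aA/b}{\sumk}$, the $q$-powers $q^{\suml_{r=1}^n (r-1)k_r-\suml_{r<s} k_rk_s}$ and $q^{\suml_{r=1}^n (r-1)j_r}$, and the monomials $\smallprod n x_r^{k_r}$ and $\smallprod n x_r^{-j_r}$ displayed in \eqref{dn-result5} should emerge; the factorials $\qrfac{aq^{\sumk-k_r}/bx_r,\, bqx_r/A}{k_r}$ in the outermost coefficient and $\qrfac{Ax_r}{j_r}$ in the middle sum should be traced, respectively, to the closed form of the $D_n$ $_3\phi_2$ and to the splitting $\qrfac{Ax_r}{j_r+m_r}=\qrfac{Ax_r}{j_r}\qrfac{Ax_rq^{j_r}}{m_r}$ coming from $K(q^{\j+\m})$.

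The hard part will be pinning down the exact $K(\y)$ and the exact parameter dictionary into the $D_n$ $_3\phi_2$ of \cite{GB1999a}, so that \emph{every} factor of the innermost $\m$-sum is absorbed with nothing left over. In particular the large factors $\qrfac{q^{-k_s}\xover x}{j_r}$ in the middle sum (cf.\ Remark~\ref{note-convergence}) must be compensated exactly by the $\qrfac{bx_rx_sq^{1+k_s}}{j_r}$ and $\qrfac{bx_rq^{1-\sumk}/a}{j_r}$ denominators appearing in \eqref{dn-result5}, and keeping track of the $q$-powers together with the monomials $\smallprod n x_r^{\pm j_r}$ thrown off by the $D_n$ Vandermonde factors is delicate; that accounting is where an error would be most likely. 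Everything else is the same bookkeeping already carried out in \S\ref{sec:trans1} and in the proof of Theorem~\ref{th:an-cntrans2}.
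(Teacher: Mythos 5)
Your proposal reproduces the paper's proof essentially verbatim: the same terminating ($\y=q^{\N}$) case of Theorem~\ref{th:cn-antrans-3}, the identical choice of $K(\y)$ (your $y_s$ versus the paper's $y_r$ inside the $\sqprod n$ factor give the same product), and the same terminating balanced $_3\phi_2$ of \cite[Th.~1]{GB1999a}, i.e.\ \eqref{dn3p2}, to collapse the innermost $\m$-sum, followed by the routine simplifications you describe. The only correction to your hedged parameter dictionary: the paper applies \eqref{dn3p2} with $x_r\mapsto x_rq^{j_r}$, $N_r=k_r-j_r$, $a\mapsto aq^{\sumk}$, $b\mapsto A$ (no $q^{\sumj}$-shift) and $c\mapsto bq$, not $c\mapsto aA/b$.
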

\begin{Remark} Theorem~\ref{th:dn-result5} contains a $D_n$ terminating very-well-poised $_6\phi_5$ summation due to the first author~\cite[Th.\ 2]{GB1999a}. To see this,  take
$$\beta_{j}=\prod_{r=1}^n \delta_{j_r,0}, b\mapsto a/c, \text{ and  } A\mapsto aq/bc$$ 
in  \eqref{dn-result5}.
\end{Remark}
\begin{proof} We choose $K(\y)$ in Theorem~\ref{th:cn-antrans-3} so that the inner sum is summable using an $A_n$  terminating balanced $_3 \phi _2$ summation theorem given in~\cite[Th.~1]{GB1999a}:
\begin{align}\label{dn3p2}
&\hspace{-1.5cm}
\triprod n \qrfac{cx_rx_s}{N_r+N_s}\sqprod n \frac{1}{ \qrfac{cx_rx_s}{N_r}}
\frac
{ \smallprod n { \qrfac{cx_r/a , c x_r /b}{N_r}}}
{  \qrfac{c/ab}{\sumN}}
  \cr
= &
\multsum{m}{N}{r} 
\vandermonde{x}{m}{n} \triprod n \qrfac{cx_rx_s}{m_r+m_s}
\cr
& \hspace{1 cm}\times
\sqprod{n} \frac{\qrfac{q^{-N_s}x_r/x_s}{m_r}}{\qrfac{qx_r/x_s, cx_rx_s}{m_r}}  
 \frac{\smallprod n\qrfac{ax_r, bx_r}{m_r}}{\qrfac{abq^{1-\sumN}/c}{\summ}} \;
q^{\suml_{r=1}^{n} rm_r } .
\end{align}
We consider the terminating (i.e. $y_r=q^{N_r}$ for $r=1, 2, \dots, n$) case of \eqref{cn-antrans-3} with 
\begin{multline*}
K(\y)=
\frac{\qrfac{aAy_1\dots y_n/b}{\infty}}
 {\qrfac{ aA/b}{\infty}}
\smallprod n  \frac{\qrfac{Ax_r,   aqx_r}{\infty}}
 {\qrfac{ Ax_ry_r, aqx_ry_r}{\infty}}\\
 \triprod{n}\frac{\qrfac{bqx_rx_s}{\infty} }{\qrfac{bqx_rx_sy_ry_s}{\infty}} 
 \sqprod{n}\frac{\qrfac{bqx_rx_sy_r}{\infty} }{\qrfac{bqx_rx_s}{\infty}},
 \end{multline*}
 with $y_r=q^{N_r}$ for $r=1, 2, \dots, n.$
 After some elementary calculations using \cite[Appendix~1]{GR90}, we apply 
 the
 $$x_r\mapsto x_rq^{j_r}, N_r=k_r-j_r,  a\mapsto aq^{\sumK}, b\mapsto A
 , \text{ and } c\mapsto bq$$
 case of
 \eqref{dn3p2}
 to sum the resulting inner-most sum obtained on the right hand side of \eqref{cn-antrans-3}. The resulting identity can be written as \eqref{dn-result5}. 
\end{proof}

\section{Application: Expansions of infinite products}\label{sec:app1}
In this section we show how our results can be applied to give many different kinds of extensions of a result due to
Liu~\cite[Th.\ 1.3]{Liu2013a}. Liu's result is an extension of the non-terminating $_6\phi_5$ sum which expands a
product consisting of several infinite $q$-factorials into a double sum. Our objective is to illustrate how this can be done; for this, we write down one result that extends the $C_n$ non-terminating $_6\phi_5$ summation given in \eqref{cn-nt6p5} due to Gustafson~\cite{RG1989}.  
These kinds of results may be useful in number-theoretic contexts, where one wishes to expand powers of theta functions
as multiple series; see
Milne~\cite{Milne2002}, Bartlett and Warnaar~\cite{BW2015}, and Griffin, Ono and Warnaar~\cite{GOW2016} for examples of some deep theorems of this nature. For simpler examples, involving double series, see Andrews \cite[Th.\ 5]{Andrews1986b} and Liu~\cite{Liu2013a}. However, in this paper we have not specialized our results to search for such applications.

Liu's result can be stated as follows:
\begin{multline}\label{liu-app1}
y^{l}\frac{\qrfac{aq, bqy}{\infty}}{\qrfac{aqy, bq}{\infty}}
\smallprod m \frac{\qrfac{aA_r, aB_ry}{\infty}}{\qrfac{aA_ry, aB_r}{\infty}}\\
=\sum_{k=0}^\infty 
\frac{(1-a q^{2k})\qrfac{1/y, a}{k}}
{(1-a)\qrfac{q, aqy}{k}} y^k 
\rPhis{m+2}{m+1}{q^{-k}, aq^k, aA_1, aA_2, \dots, aA_m}{bq, aB_1, aB_2, \dots, aB_m}{q}{q^{l+1}},
\end{multline}
where we have used the $_r\phi_s$ notation from \cite[p.~4]{GR90}. We have taken 
$a\mapsto yq$, $\alpha=a$, $b\mapsto bq/a$, $b_r\mapsto B_r$ and $c_r\mapsto C_r$ in 
\cite[Th.\ 1.3]{Liu2013a}. It is assumed that the denominator parameters are such that the denominator is never $0$, and the series, if non-terminating, converges. 

In multiple series extensions of this result, we can introduce considerable variation in the way the parameters appear; thus we state our example in generic terms. 
\begin{Theorem}[An extension of the $C_n$ non-terminating $_6\phi_5$ summation]\label{th:cn-app1}
Let $H(\y)$ be chosen so that both sides are analytic in $y_1, y_2, \dots, y_n$ and converge absolutely for each $y_r$ in a disk  around the origin; or, $y_r=q^{N_r}$, for $r=1, 2, \dots, n$, and so the series terminates.  Then
\begin{align}\label{cn-app1}
\frac{ \qrfac{aA}{\infty}}{ \qrfac{aAy_1y_2 \dots y_n}{\infty}} 
&
\smallprod n  \frac{  \qrfac{aqx_r^2,  bqx_ry_r, aBx_ry_r}{\infty}}
 { \qrfac{bqx_r,aB x_r}{\infty}} 
 \cr
 &
\triprod n \qrfac{aqx_rx_s, aqx_rx_sy_ry_s }{\infty}  \sqprod{n} \frac{1}{\qrfac{aqx_rx_sy_s}{\infty}}
H(\y)
\cr
=
\multisum{k}{n} &
\triprod n \frac{\Big(1-q^{k_r-k_s}x_r/x_s\Big)\Big(1-ax_rx_sq^{k_r+k_s}\Big)}
{\big(1-x_r/x_s\big)\big(1-ax_rx_s\big)}
\cr
&
\smallprod n \frac{1-ax_{r}^2q^{2k_r}}{1-ax_r^2}
\sqprod{n}\frac{\qrfac{\xover{x}y_s}{k_r} \qrfac{ax_rx_s}{k_r}  }{\qrfac{q\xover{x}}{k_r} \qrfac{aqx_rx_sy_s}{k_r} }
\cr
&
 \hspace{0.9cm}     (y_1y_2\dots y_n)^\sumk \powerq{k}{n}
 \cr
 &
\times
\underset{r=1, \dots, n}{\sum\limits_{0\leq m_r\leq k_r}}
\triprod n \frac{\Big(1-q^{m_r-m_s}x_r/x_s\Big)}
{\big(1-x_r/x_s\big)\qrfac{ax_rx_s}{m_r+m_s}}
\cr
& \hspace{1.75cm}
 \sqprod{n}\frac{\qrfac{q^{-k_s}\xover{x}, ax_rx_sq^{k_s}}{m_r} }{\qrfac{q\xover{x}}{m_r} }
 \; \qrfac{aA}{\summ} 
 \cr
 &\hspace{2cm}
 \smallprod n \frac{1}{\qrfac{bqx_r,aBx_r}{m_r}}
 q^{\suml_{r=1}^n rm_r} H(q^{\m})
\end{align}
\end{Theorem}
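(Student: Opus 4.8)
The plan is to read \eqref{cn-app1} off the general expansion formula of Theorem~\ref{th:an-cntrans1} by freezing its middle index. Concretely, in \eqref{an-cntrans1} I would take $\beta(\j)=\prod_{r=1}^n\delta_{j_r,0}$. Then every term of the middle ($\j$) sum except $\j=\mathbf 0$ drops out: on the left all factors other than $K(\y)$ evaluate to $1$, so the left side becomes simply $K(\y)$, while on the right the index $\j$ is frozen at $\mathbf 0$, the $\m$-sum now ranges over $0\le m_r\le k_r$, and one is left with a double sum over $\k$ and $\m$ carrying $K(q^{\m})$ inside. For the terminating case one applies the $\y=q^{\N}$ version of Theorem~\ref{th:an-cntrans1} in the same way, and then passes back to general $\y$ by the Identity Theorem, exactly as in the proof of Theorem~\ref{th:an-trans1}.

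Next I would choose
\begin{multline*}
K(\y)= \frac{\qrfac{aA}{\infty}}{\qrfac{aAy_1y_2\dots y_n}{\infty}}\;
\smallprod n \frac{\qrfac{aqx_r^2, bqx_ry_r, aBx_ry_r}{\infty}}{\qrfac{bqx_r, aBx_r}{\infty}}\\
\times\triprod n \qrfac{aqx_rx_s, aqx_rx_sy_ry_s}{\infty}\;
\sqprod n \frac{1}{\qrfac{aqx_rx_sy_s}{\infty}}\; H(\y),
\end{multline*}
that is, the infinite-product prefactor multiplying $H(\y)$ on the left of \eqref{cn-app1}. By the $q$-binomial theorems (see the remarks following Theorem~\ref{th:an-result5a}) each quotient of infinite products above is analytic in a polydisk about the origin, so $K(\y)$ meets the analyticity and convergence hypothesis of Theorem~\ref{th:an-cntrans1} precisely when $H(\y)$ does, which is what is assumed here; and with this $K$ the left side of \eqref{an-cntrans1} is literally the left side of \eqref{cn-app1}.

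It remains to recognize the right side. Writing $K(q^{\m})=C(q^{\m})H(q^{\m})$, where $C$ denotes the prefactor above, I would reduce $C(q^{\m})$ by the elementary identity $\qrfac{A}{\infty}/\qrfac{Aq^m}{\infty}=\qrfac{A}{m}$ applied termwise, after splitting every product $\sqprod n$ into its $r<s$, $r>s$, and $r=s$ parts. All the $\k$- and $\m$-independent infinite products then cancel, by design, and the surviving $q$-rising factorials merge with the coefficients already present in \eqref{an-cntrans1} at $\j=\mathbf 0$. In particular the $C_n$-type factor $\triprod n (1-ax_rx_sq^{m_r+m_s})/(1-ax_rx_s)$ left over from \eqref{an-cntrans1}, together with $\smallprod n \qrfac{aqx_r^2}{m_r}$ (coming from $C(q^{\m})$) and the denominator factor $\sqprod n \qrfac{aqx_rx_s}{m_r}$ of the $\m$-sum, collapses --- via $(1-ax_rx_s)\qrfac{aqx_rx_s}{m_r+m_s}=\qrfac{ax_rx_s}{m_r+m_s}(1-ax_rx_sq^{m_r+m_s})$ and $\qrfac{aqx_rx_s}{m_r+m_s}=\qrfac{aqx_rx_s}{m_s}\qrfac{aqx_rx_sq^{m_s}}{m_r}$ --- into exactly the factor $\triprod n \dfrac{1-q^{m_r-m_s}x_r/x_s}{(1-x_r/x_s)\qrfac{ax_rx_s}{m_r+m_s}}$ of \eqref{cn-app1}; similarly $\qrfac{aA}{\summ}$, the factor $\smallprod n \qrfac{bqx_r, aBx_r}{m_r}$ (now in the denominator), the power $q^{\sum_{r=1}^n rm_r}$, and the remaining $\sqprod n$-quotient all fall into place. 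These are routine manipulations with $q$-rising factorials of the kind carried out in the proof of Theorem~\ref{th:an-cntrans2}, using \cite[Appendix~1]{GR90} and \cite[Lemmas~3.12 and~4.3]{Milne1997}; no new idea is needed. The one delicate point --- and the main obstacle --- is exactly this bookkeeping: checking that every $\k$- and $\m$-free infinite product genuinely cancels, and that the ``$C_n$ positive root'' factors regroup cleanly into the compact shape displayed in \eqref{cn-app1}. As a sanity check, setting $H\equiv 1$ makes the inner $\m$-sum summable by the $D_n$ balanced $_3\phi_2$ of Milne and Lilly in \eqref{dn3p2-1} --- the $K$ above being, up to renaming two parameters, the one used in the proof of Theorem~\ref{th:an-cntrans2} --- so that \eqref{cn-app1} then collapses to Gustafson's $C_n$ non-terminating $_6\phi_5$ summation \eqref{cn-nt6p5}.
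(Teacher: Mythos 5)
Your proposal is correct and is essentially the paper's own proof: the authors likewise specialize Theorem~\ref{th:an-cntrans1} with $\beta(\j)=\prod_{r=1}^n\delta_{j_r,0}$ and with $K(\y)$ equal to the displayed infinite-product prefactor times $H(\y)$, then simplify exactly as in the proof of Theorem~\ref{th:an-cntrans2} except that no $_3\phi_2$ summation is applied. Your closing sanity check (recovering Gustafson's $C_n$ $_6\phi_5$ when $H\equiv 1$) matches the paper's remark to the same effect.
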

\begin{proof} 
We consider \eqref{an-cntrans1} with $K(y)$ replaced by 
\begin{align*}
\frac{ \qrfac{aA}{\infty}}{ \qrfac{aAy_1y_2 \dots y_n}{\infty}} 
&
\smallprod n  \frac{  \qrfac{aqx_r^2,  bqx_ry_r, aBx_ry_r}{\infty}}
 { \qrfac{bqx_r,aB x_r}{\infty}} 
 \cr
 &
\triprod n \qrfac{aqx_rx_s, aqx_rx_sy_ry_s }{\infty}  \sqprod{n} \frac{1}{\qrfac{aqx_rx_sy_s}{\infty}}
H(\y),
\end{align*}
and 
$$\beta(\j) =\prod_{r=1}^n \delta_{j_r,0}.$$ 
After simplifying as in the proof of Theorem~\ref{th:an-cntrans2}, we obtain \eqref{cn-app1}. The only thing different from
Theorem~\ref{th:an-cntrans2} is that we don't apply a $_3\phi_2$ summation. 
\end{proof}

Depending on what kinds of products we require on the left hand side, we can take different values of $H(\y)$. For example, the following types of products have appeared in this theory, and may be useful in different contexts. 

\begin{align*}
\sqprod n & \frac{
\qrfac{aC\xover x, aDy_s\xover x}{\infty}}
{\qrfac{aCy_s\xover x, aD\xover x}{\infty}};
\sqprod n \frac{
\qrfac{aEx_rx_s, aFx_rx_sy_s}{\infty}}
{\qrfac{aEx_rx_sy_s, aFx_rx_s}{\infty}};\cr
\smallprod n & \frac{
\qrfac{aGx_r, aHx_ry_r}{\infty}}
{\qrfac{aGx_ry_r, aHx_r}{\infty}};
\frac{\qrfac{aJ, aKy_1y_2\dots y_n}{\infty}}{\qrfac{aJy_1y_2\dots y_n, aK, }{\infty}}; \;
\smallprod n y_r^{l_r}.
\end{align*}

This is a very small sample of possible products. We can take a mix of multiple products of any type. The only requirement is that the $H(\y)$ is analytic in a region of the form $|y_r|<\epsilon_r$, for $r=1, 2, \dots, n$. In addition, see Remark~\ref{note-convergence}.

\section{An $A_n$ extension of Liu's third expansion formula}\label{sec:liu3}
In this section we give an $A_n$ extension of \eqref{liu3}. Liu used \eqref{liu3} to find many formulas, including one transformation formula that can be regarded as an extension of Watson's transformation
\cite[Eq.\ (2.5.1)]{GR90}. Our intent here is to illustrate another application of the technique used in this paper.

First, we codify our approach to Wang-Ma's results given in \S\ref{sec:intro} in the form of the following Lemma. 
\begin{Lemma}\label{lemma:qLidea} Let $\F=(F_{km})$ and $\G=(G_{km})$ be a pair of inverse, infinite, lower triangular matrices. Let
$f_k(y)$ be analytic in $y$ in a disk around the origin, such that $f_k(q^N)=F_{Nk}$. Let $K(y)$, $h_j(y)$ and $\beta(j)$ 
be such that both sides of the following transformation formula are analytic in $y$ in a disk $|y|<\epsilon$; or $y=q^N$ for some non-negative integer $N$,  $h_j(q^N)=0=f_j(q^N)$ for $j>N$, and the series terminate.
Then
\begin{equation}\label{lemma-idea}
 K(y)\sum_{j=0}^\infty h_j(y) \beta_j
 = 
 \sum_{k=0}^\infty f_k(y) \sum_{j=0}^k \beta_j \sum_{m=0}^{k-j} h_j(q^{j+m}) G_{k,j+m} K(q^{j+m})
\end{equation}
\end{Lemma}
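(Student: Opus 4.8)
The plan is to run the abstract version of the argument sketched in \S\ref{sec:intro}: first establish \eqref{lemma-idea} when $y=q^N$ for a non-negative integer $N$, using only finite manipulations and the matrix inversion $\F\G=I$, and then extend to the analytic case by the Identity Theorem. So I would set $A(y):=K(y)\sum_{j\ge 0}h_j(y)\beta_j$ and fix $N$. Since $\F$ and $\G$ are mutually inverse lower-triangular matrices, $\sum_k F_{Nk}G_{km}=\delta_{Nm}$, and therefore
\[
A(q^N)=\sum_{k=0}^N F_{Nk}\,\alpha_k, \qquad \alpha_k:=\sum_{m=0}^k G_{km}A(q^m),
\]
all sums being finite by lower-triangularity. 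Substituting $A(q^m)=K(q^m)\sum_{j=0}^m h_j(q^m)\beta_j$ (legitimate since $h_j(q^m)=0$ for $j>m$) gives
\[
A(q^N)=\sum_{k=0}^N F_{Nk}\sum_{m=0}^k G_{km}K(q^m)\sum_{j=0}^m h_j(q^m)\beta_j.
\]

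Next I would interchange the two inner finite sums via $\sum_{m=0}^k\sum_{j=0}^m=\sum_{j=0}^k\sum_{m=j}^k$ and reindex $m\mapsto j+m$, obtaining
\[
A(q^N)=\sum_{k=0}^N F_{Nk}\sum_{j=0}^k\beta_j\sum_{m=0}^{k-j}h_j(q^{j+m})\,G_{k,j+m}\,K(q^{j+m}).
\]
Then I would invoke the hypothesis $f_k(q^N)=F_{Nk}$ to replace $F_{Nk}$ by $f_k(q^N)$, and note that $f_k(q^N)=F_{Nk}=0$ for $k>N$, so the outer sum extends to $k=0,1,2,\dots$ without changing its value. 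This is precisely \eqref{lemma-idea} evaluated at $y=q^N$, which settles the terminating case. For the general statement, both sides of \eqref{lemma-idea} are assumed analytic in $y$ on a common disk $|y|<\epsilon$ about the origin; since they agree at the points $q^N$, $N=0,1,2,\dots$, which accumulate at $0$ because $|q|<1$, the Identity Theorem forces them to agree throughout the disk.

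I do not expect any single computation to be the obstacle — every manipulation above is over finite sums and is entirely routine. The real content is the bookkeeping of hypotheses that makes the analytic-continuation step legitimate: one must know \emph{a priori} that the (in general infinite) series on both sides of \eqref{lemma-idea} define analytic functions on a common neighbourhood of $0$, and that the terminating and analytic cases are linked by exactly the same formula. In the applications this analyticity is arranged by a suitable choice of $K$, $h_j$ and $\beta_j$, as in Theorems~\ref{th:an-trans1}--\ref{th:dn-result5}; the Lemma simply isolates the combinatorial core, so that each later application reduces to verifying analyticity and then applying an appropriate summation theorem to the inner sum.
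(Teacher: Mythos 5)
Your argument is correct and is essentially the paper's own: the Lemma is proved exactly by the matrix-inversion/interchange-of-sums derivation sketched in \S\ref{sec:intro} (leading to \eqref{wang-ma-laststep}), followed by the same Identity Theorem continuation from $y=q^N$, $N=0,1,2,\dots$, used in the proof of Theorem~\ref{th:an-trans1}. Your version merely makes the inversion step explicit by defining $\alpha_k=\sum_m G_{km}A(q^m)$ and verifying $\sum_k F_{Nk}\alpha_k=A(q^N)$, which is the rigorous form of the same argument.
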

\begin{Remark} The key idea in this paper is to use equivalent forms of the Bailey matrices given in \eqref{BaileyF} and
\eqref{BaileyG}; further, the choice of  $K(y)$ that appears to be useful is when the inner sum on the right is summable using the terminating, balanced, $_3\phi_2$ summation theorem. We expect to examine other matrices and summation results in a separate work. 
\end{Remark}

We now give an alternate proof of Liu's Theorem~9.2 \cite{Liu2013c}. Liu used $q$-Calculus but we use $q$-Lagrange inversion as given in Lemma~\ref{lemma:qLidea}. 
\begin{proof}[Proof of \eqref{liu3}]
We use the terminating case $y=q^N$ of \eqref{lemma-idea} with
$$K(y) =
\frac{\qrfac{ aq, aAB/q, aAy, aBy}{\infty}}{\qrfac{ aqy, aABy/q, aA, aB}{\infty}} ;
h_{j}(y) = 
\frac{\qrfac{1/y}{j}}{\qrfac{q^2/aABy}{j}};
\beta(j) =
\qrfac{q/A,q/B}{j} q^{j}A_{j};
$$
$f_{k}(y)=f_k(y;a)$ from \eqref{fk-1}; and 
$G_{km}=G_{km}(a)$ from \eqref{BaileyG}. 
After some simplification, we apply a special case of the $q$-Pfaff-Saalch{\"u}tz summation \cite[Eq.\ (1.7.2)]{GR90}
to obtain \eqref{liu3}.
\end{proof}

Extending this proof to prove multiple series extensions of \eqref{liu3} over root system is routine, given our work in this paper. As in \S\S\ref{sec:trans1} and \ref{sec:cn-dn}, there are several different extensions. We present one as a sample. The computation is a mild variation of the proofs of Theorems~\ref{th:an-trans1} and \ref{th:an-result5a}.

\begin{Theorem}[An $A_n$ extension of \eqref{liu3}]\label{th:an-liu3} Let ${N_r}$, for $r=1, 2, \dots, n$, be non-negative integers, and $A_{\j}$ and the parameters $a$, $A$, $B$, and $x_1,x_2,\dots, x_n$ and be such that the series below have no zero terms in the denominator.  
Then we have
\begin{align}\label{an-liu3}
\frac{\qrfac{aAB/q}{\sumN}}{\qrfac{ aA}{\sumN}} &
\smallprod n \frac{\qrfac{aqx_r}{N_r}}{\qrfac{aBx_r}{N_r}} \cr
\multsum{j}{N}{r} &
\vandermonde{x}{j}{n}  
\sqprod{n}\qrfac{q^{-N_s}\xover{x}}{j_r} \cr
&\smallprod n \qrfac{qx_r/A}{j_r}
 \frac{\qrfac{q/B}{\sumj}}{\qrfac{q^{2-\sumN}/aAB}{\sumj}}
q^{\suml_{r=1}^{n} rj_r }  A_{\j} \cr
=
\multsum{k}{N}{r} &
\vandermonde{x}{k}{n} 
\sqprod{n}\frac{\qrfac{q^{-N_s}\xover{x}}{k_r} }{\qrfac{q\xover{x}}{k_r} }\cr
&
\smallprod n \frac{\big(1-ax_rq^{k_r+\sumk}\big)\qrfac{ax_r}{\sumk} \qrfac{qx_r/A}{k_r}}
{(1-ax_r)\qrfac{ax_rq^{1+N_r}}{\sumk}\qrfac{aBx_r}{k_r}} \cr
& \frac{\qrfac{q/B}{\sumk}}{\qrfac{aA}{\sumk}}
(aABq^{\sumN-1})^\sumk \powerq{k}{n} \cr
&\times 
\multsum{j}{k}{r}
\vandermonde{x}{j}{n} 
\sqprod{n}\qrfac{q^{-k_s}\xover{x}}{j_r} \cr
&\hspace{2cm}
\smallprod n\qrfac{ax_rq^{\sumk}}{j_r}
q^{\suml_{r=1}^{n} rj_r } A_{\j}
.
\end{align}
\end{Theorem}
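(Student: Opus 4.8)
The plan is to repeat the matrix-inversion derivation used for Theorem~\ref{th:an-trans1}, but---exactly as in the one-variable passage from \eqref{lemma-idea} to \eqref{liu3} in \S\ref{sec:liu3}---to replace the expansion function $f_{\j}(\y;b)$ by a different one. I would keep the $A_n$ Bailey matrices $\F=(F_{\k\m}(a))$, $\G=(G_{\k\m}(a))$ of Proposition~\ref{prop:bailey-inversion} and the function $f_{\k}(\y;a)$ of \eqref{fk} (so that $f_{\k}(q^{\m};a)=F_{\m\k}(a)$), and then introduce $h_{\j}(\y)$ as the $A_n$ analog of Liu's $h_j(y)=\qrfac{1/y}{j}/\qrfac{q^2/aABy}{j}$. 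Natural candidates carry the type-$A$ Vandermonde $\vandermonde{x}{j}{n}$ together with $\sqprod{n}\qrfac{\xover{x}y_s}{j_r}$ and the single Pochhammer $1/\qrfac{q^2/(aABy_1\cdots y_n)}{\sumj}$, with the residual monomial and $q$-power factors pinned down by requiring $h_{\j}$ to be analytic in $\y$ near $0$ and to reduce at $\y=q^{\m}$ to the block needed for the sum-interchange. Correspondingly I would take $\beta(\j)$ to be the $A_n$ analog of $\qrfac{q/A,q/B}{j}\,q^jA_j$, roughly $\smallprod n\qrfac{qx_r/A}{j_r}\cdot\qrfac{q/B}{\sumj}\,q^{\sum_{r=1}^n rj_r}A_{\j}$.

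With these choices, set $A(\y):=K(\y)\,\multisum{j}{n}h_{\j}(\y)\beta(\j)$, expand $A(q^{\N})=\sum_{\k\le\N}F_{\N\k}(a)\alpha(\k)$, invert via Proposition~\ref{prop:bailey-inversion} to obtain $\alpha(\k)=\sum_{\m\le\k}G_{\k\m}(a)A(q^{\m})$, substitute back, and interchange the two inner sums precisely as in the proof of Theorem~\ref{th:an-trans1}. This yields an $A_n$ version of the identity \eqref{lemma-idea} of Lemma~\ref{lemma:qLidea}, whose innermost sum runs over $\m$ with $0\le m_r\le k_r-j_r$ and has summand $h_{\j}(q^{\j+\m})\,G_{\k,\j+\m}(a)\,K(q^{\j+\m})$.

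I would then choose $K(\y)$ as the $A_n$ analog of Liu's product $\qrfac{aq,aAB/q,aAy,aBy}{\infty}/\qrfac{aqy,aABy/q,aA,aB}{\infty}$---that is, $\qrfac{aAB/q}{\infty}/\qrfac{aABy_1\cdots y_n/q}{\infty}$ times $\smallprod n\qrfac{aqx_r,\ldots}{\infty}/\qrfac{aqx_ry_r,\ldots}{\infty}$ with the $A$- and $B$-dependence spread over the $x_r$---so that after the usual preliminary simplifications (via \cite[Appendix~1]{GR90} and \eqref{magiclemma2}) the innermost $\m$-sum becomes a parametric case of an $A_n$ terminating balanced $_3\phi_2$ summation of Milne (e.g.\ \eqref{an3p2-1}), under substitutions of the shape $x_r\mapsto x_rq^{j_r}$, $N_r=k_r-j_r$, $a\mapsto aq^{\sumk}$, with $b$ and $c$ built from $A$, $B$, $\sumj$ (the analog of the $q$-Pfaff--Saalch{\"u}tz step used for \eqref{liu3} itself). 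Summing it collapses the triple sum to a double sum, and a final round of routine rearrangement---$q$-power bookkeeping, the $(-1)^{\sumj}q^{\binom{\sumj}{2}}$ factors, the $x_r$-monomials, handled with \cite[Appendix~1]{GR90}, \eqref{magiclemma2}, and the determinant-type identity of \cite[Lemma~3.12]{Milne1997}---recasts the identity as \eqref{an-liu3}. Because the theorem is asserted only for terminating series ($y_r=q^{N_r}$), no analytic continuation is required.

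The main obstacle is the joint calibration of the triple $(h_{\j},K,\beta)$: one must simultaneously ensure that $A(\y)$ is analytic near $\y=0$ with the left side of \eqref{an-liu3} in the stated closed product form, that $h_{\j}(q^{\m})$ slots cleanly into the interchange of summations, and that the chosen $K$ converts the innermost $\m$-sum into exactly a recognizable $A_n$ balanced $_3\phi_2$. Tracking how $x_r\mapsto x_rq^{j_r}$ and $N_r=k_r-j_r$ propagate through the Vandermonde factors and the ``magic'' identity \eqref{magiclemma2}, and checking that every residual power of $q$ and monomial in the $x_r$ cancels, is where care is needed; otherwise the argument is a direct transcription of what was already done for Theorems~\ref{th:an-trans1} and \ref{th:an-result5a} and for \eqref{liu3}.
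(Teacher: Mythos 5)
Your proposal follows essentially the same route as the paper: the published proof is exactly this terminating, multiple-series form of Lemma~\ref{lemma:qLidea} built on Proposition~\ref{prop:bailey-inversion}, with $f_{\k}(\y;a)$ from \eqref{fk}, $G_{\k\m}(a)$ from \eqref{An-Bailey-G}, $h_{\j}(\y)=\vandermonde{x}{j}{n}\sqprod{n}\qrfac{\xover{x}y_s}{j_r}\,\powerq{j}{n}\big/\qrfac{q^2/aABy_1\cdots y_n}{\sumj}$ and $\beta(\j)=\smallprod n\qrfac{qx_r/A}{j_r}\,\qrfac{q/B}{\sumj}\,q^{\sumj}A_{\j}$ (the same product you propose, up to how the $q$-powers are split between $h$ and $\beta$), followed by summing the innermost sum via the $x_r\mapsto x_rq^{j_r}$, $N_r=k_r-j_r$, $a\mapsto aq^{\sumk}$, $b\mapsto aAB/q$, $c\mapsto aB$ case of Milne's balanced $_3\phi_2$ \eqref{an3p2-1}. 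The one calibration you leave open is resolved as the theorem's left side suggests: in $K(\y)$ the $aA$-dependence stays global while $aq$ and $aB$ are distributed over the $x_r$, namely $K(\y)=\dfrac{\qrfac{aAB/q,\,aAy_1\cdots y_n}{\infty}}{\qrfac{aABy_1\cdots y_n/q,\,aA}{\infty}}\smallprod n\dfrac{\qrfac{aqx_r,\,aBx_ry_r}{\infty}}{\qrfac{aqx_ry_r,\,aBx_r}{\infty}}$.
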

\begin{Remarks}\
\begin{enumerate}
\item We have obtained a result for terminating series. In certain situations, we can obtain a non-terminating result where we can replace $q^{N_r}$ by $y_r$ just as is done in Theorem~\ref{th:an-result5a}. For example, when $B\to 0$, then we obtain an analytic function in $y_1, y_2, \dots, y_n$ and we can use the $B\to 0$ case of
\eqref{an-liu3} to obtain a non-terminating expansion formula. 
\item
 Liu~\cite{Liu2013c} has given many examples and special cases of his result; we can do analogous computations to obtain many further applications of our approach. 
\end{enumerate}
\end{Remarks}
\begin{proof}
We use a straightforward extension of the terminating Lemma~\ref{lemma:qLidea}  to multiple series with 
 $y_r=q^{N_r}$ for $r=1$, $\dots$, $n$, and 
\begin{align*}
K(\y)&=
\frac{\qrfac{aAB/q, aAy_1\dots y_n}{\infty}}{\qrfac{aABy_1\dots y_n/q, aA}{\infty}} 
\smallprod n \frac{\qrfac{aqx_r, aBx_ry_r}{\infty}}{\qrfac{aqx_ry_r, aBx_r}{\infty}} ;\\
h_{\j}(\y) &= 
 \vandermonde{x}{j}{n}  
\sqprod{n}\qrfac{\xover{x}y_s}{j_r}
\frac{1}{\qrfac{q^2/aABy_1\dots y_n}{\sumj}}
\powerq{j}{n};\\
\beta(\j)&=
\smallprod n \qrfac{qx_r/A}{j_r} \;
 \qrfac{q/B}{\sumj} q^{\sumj}A_{\j}.
 \end{align*}
Along with the above, we need 
$f_{\k}(\y;a)$ from \eqref{fk} and 
$G_{\k\m}(a)$ from \eqref{An-Bailey-G}. After inserting these in a suitable generalization of \eqref{lemma:qLidea},
we apply the 
 $$x_r\mapsto x_rq^{j_r}, N_r=k_r-j_r, a\mapsto aq^{\sumK}, b\mapsto aAB/q
 , \text{ and } c\mapsto aB$$
 case of
 \eqref{an3p2-1} (due to Milne~\cite[Theorem 4.1]{Milne1997}) and simplify to obtain \eqref{an-liu3}.
\end{proof}

\section{Concluding remarks}
In this paper, we have seen several multiple series extensions of 
\eqref{wang-ma1} and \eqref{wang-ma2}, which, in turn, extend Liu's useful expansion formulas
\eqref{liu-main1} and \eqref{liu-gen1}. The method we use, essentially due to Wang and Ma \cite{WM2017}, also applies to prove Liu's third formula \eqref{liu3}. As we saw, this approach brings Liu's expansion formulas within the Bailey transform methodology. 

We listed three extensions of \eqref{wang-ma1} and four of \eqref{wang-ma2}; more are possible. Our extensions contain several $_6\phi_5$ summation theorems over root systems. 
There are additional summations in \cite{Milne1997} and \cite{MS2008} which hint towards additional results of this nature.  There are further interesting special cases. For example, note that the sums on the left hand side of
\eqref{an-result5a} and \eqref{an-cntrans2} are identical. Comparing the two results gives us a transformation formula. In addition, there are many possible applications and useful special cases 
in Liu's papers that can be extended to multiple series.  We expect to explore these ideas later.

\subsection*{Acknowledgements} This work was initiated during the first author's visit to the School of Physical Sciences (SPS), JNU, Delhi. Surbhi Rai was supported by the Ministry of Education, Govt.\ of India. This paper is part of her Ph.D. thesis.  We thank Bruce Berndt, Xinrong Ma and the anonymous referee for helpful advice.  We thank Amitabha Tripathi for 
his help during this project. 

%

\end{document}